\definecolor{darkgreen}{rgb}{0,0.5,0}
\definecolor{darkred}{rgb}{0.7,0,0}
\theoremstyle{plain}
\numberwithin{equation}{section}
\newcommand{\al}{\alpha}
\newcommand{\be}{\beta}
\newcommand{\de}{\delta}
\newcommand{\la}{\lambda}
\renewcommand{\th}{\theta}
\newcommand{\ep}{\varepsilon}
\newcommand{\R}{\ensuremath{{\mathbb R}}}
\newcommand{\Sp}{\ensuremath{{\mathbb S}}}
\newcommand{\N}{\ensuremath{{\mathbb N}}}
\newcommand{\Z}{\ensuremath{{\mathbb Z}}}
\newcommand{\downto}{\downarrow}
\newcommand{\upto}{\uparrow}
\newcommand{\lap}{\Delta}
\DeclareMathOperator{\Vol}{Vol}
\newcommand{\beq}{\begin{equation}}
\newcommand{\eeq}{\end{equation}}
\newcommand{\beqa}{\begin{equation}\begin{aligned}}
\newcommand{\eeqa}{\end{aligned}\end{equation}}
\newcommand{\brmk}{\begin{rmk}}
\newcommand{\ermk}{\end{rmk}}
\newcommand{\partref}[1]{\hbox{(\csname @roman\endcsname{\ref{#1}})}}
\newcommand{\half}{\frac{1}{2}}
\newcommand{\Rm}{{\mathrm{Rm}}}
\newcommand{\Ric}{{\mathrm{Ric}}}
\newtheorem{thm}{Theorem}[section]
\newtheorem{lem}[thm]{Lemma}
\newtheorem{prop}[thm]{Proposition}
\newtheorem{defn}[thm]{Definition}
\newtheorem{rmk}[thm]{Remark}
\newtheorem{claim}[thm]{Claim}
\newcommand{\eps}{\varepsilon}
\title{Steady Gradient Ricci Solitons with $O(p)\times O(q)$ Symmetry}
\date{}
\author{Lucas Lavoyer}
\address[Lucas Lavoyer]{Mathematisches Institut, Universit\"at M\"unster, 48149 M\"unster, Germany}
\email{lucas.lavoyer@uni-muenster.de}
\author{Luke T. Peachey}
\address[Luke T. Peachey]{Institut f\"ur Mathematik, Freie Universit\"at Berlin, 14195 Berlin, Deutschland}
\email{l.peachey@fu-berlin.de}
\begin{document}

\begin{abstract}
We find new examples of steady gradient Ricci solitons with positive curvature operator in dimensions four and above. Utilising a procedure first introduced by Lai, we construct examples with $O(p) \times O(q)$ symmetry in dimension $p+q$ for any pair of integers $p,q \geq 2$ and discuss their asymptotic geometry.
\end{abstract}

\maketitle

\section{Introduction}

Ricci solitons correspond to Ricci flows which are self-similar modulo scaling and reparameterisation. Amongst them, steady Ricci solitons correspond to those invariant under reparameterisation only and can be used to model the formation of Type II singularities under the Ricci flow \cite{Ham95}. A triple $(M,g,f)$ consisting of a complete Riemannian manifold $(M,g)$ and a smooth function $f:M \to \R$ is called a steady gradient Ricci soliton if
\begin{equation}\label{eq SGRS}
    \Ric(g) = \nabla^2 f.
\end{equation}
In this case, the steady soliton structure naturally induces an eternal Ricci flow on $M,$ $g(t) = \phi_t^*g$, where $\phi_t$ is the flow of the vector field $-\nabla f$ and $t\in (-\infty, \infty).$

Examples of Ricci solitons can be found by imposing symmetry ansatz. The strongest symmetry assumption one can make is rotational symmetry, which reduces \eqref{eq SGRS} to an ODE. Up to scaling, the only non-flat rotationally symmetric steady gradient Ricci solitons are Hamilton's cigar soliton in dimension two \cite{RFonSurfaces}, and Bryant's example in dimensions three and higher \cite{Bryant}. One may instead impose weaker symmetry assumptions. Conjectured to exist by Hamilton, Lai constructed the first examples of $O(1) \times O(n-1)$ symmetric steady gradient Ricci solitons with positive curvature operator in dimensions three and higher \cite{Lai24}. These solitons are referred to as flying wings due  to their shape at infinity, as the link of their asymptotic cone is an interval of length $\al \in (0,\pi).$ In \cite{Lai-flyingWingsForAnyCone}, Lai proves that there exists a 3-dimensional flying wing for any such \(\al \in (0,\pi).\) It is a consequence of their asymptotic geometry that these flying wings are collapsed in dimension three and non-collapsed in dimensions four and above. See also \cite{kahlerflyingwings} for work on Kähler flying wings, and \cite{CLL} for even more recent work on higher dimensional flying wings, which appeared after the first version of this work was made publicly available. 

More generally, one may consider the symmetry ansatz of the group $O(p) \times O(q)$ for any pair of integers $p,q \geq 2$ in dimension $p+q \geq 4$. If $(M^{p+q},g,f)$ denotes a complete steady gradient Ricci soliton with $O(p) \times O(q)$ symmetry, the isotropy representation at the unique fixed point of the action decomposes the tangent space into two irreducible subspaces. We denote by $\la_p$ and $\la_q$ the sectional curvatures of any plane contained entirely within one of these subspaces (see Section~\ref{prelimSection} for more details).

\begin{thm}\label{thmpq}
For any $p,q 
\in \Z_{\geq 2}$ and $\th \in (0,1)$, there exists a steady gradient Ricci soliton $\mathcal{S}^{p,q}_\th$ of dimension $p+q$ with positive curvature operator and $O(p) \times O(q)$ symmetry, such that the invariant sectional curvatures at the vertex satisfy $\la_p  = \th \cdot \la_q$.
\end{thm}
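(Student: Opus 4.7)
The plan is to extend Lai's construction of $O(1)\times O(n-1)$-symmetric flying wings to the doubly symmetric case $p,q\geq 2$, realizing $\mathcal{S}^{p,q}_\theta$ as a pointed limit (after Type~II rescaling at the tip) of a one-parameter family of $O(p)\times O(q)$-invariant Ricci flows with positive curvature operator. The parameter $\theta$ will arise from the asymptotic data of the approximating flows, with $\theta\to 1$ recovering the rotationally symmetric Bryant soliton and $\theta\to 0$ corresponding to a degenerate cylindrical limit.

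I would begin by setting up the cohomogeneity-two framework: an $O(p)\times O(q)$-invariant metric on $\R^{p+q}$ is determined on the orbit space (a closed quadrant) by a small number of scalar functions of two variables, and smoothness at the vertex $o$ together with the isotropy decomposition identifies the sectional curvatures $\lambda_p(o)$ and $\lambda_q(o)$ as intrinsic invariants whose ratio encodes $\theta$. For each $\alpha$ in a suitable parameter range, I would then construct an $O(p)\times O(q)$-symmetric compact initial datum, for instance a doubly warped structure on $S^{p+q}$, with positive curvature operator and prescribed asymmetry between the two warping radii near a chosen pole.

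Running Ricci flow from each such initial metric preserves both the symmetry and the positivity of the curvature operator. At points where the curvature attains a growing fraction of the spacetime maximum I would perform a Type~II rescaling and invoke Hamilton's compactness theorem to extract a smooth eternal limit flow; non-collapsing in dimension $p+q\geq 4$ under positive curvature operator (via Perelman's monotonicity) furnishes the required injectivity radius bound. By Hamilton's differential Harnack inequality, an eternal limit with nonnegative curvature operator attaining its maximum is automatically a steady gradient Ricci soliton, and it inherits the $O(p)\times O(q)$ symmetry and positive curvature operator from the approximating sequence.

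The main obstacle is controlling this limit precisely. One must rule out (a) splitting of a Euclidean or lower-dimensional cigar factor, (b) enhancement of the symmetry group to the full $O(p+q)$, which would collapse the construction to Bryant, and (c) loss of the prescribed invariant $\lambda_p/\lambda_q$ at the vertex. Establishing continuous dependence of this limiting ratio on the initial parameter $\alpha$, together with the two extremal behaviors $\theta\to 0$ and $\theta\to 1$, lets one sweep out every value in $(0,1)$ by an intermediate-value argument. Proving this continuous dependence and the extremal limits --- the analog of Lai's asymptotic cone-angle computation in the doubly symmetric setting --- is the crux of the proof.
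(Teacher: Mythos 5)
Your proposal takes a genuinely different route from the paper, and as written it contains a fatal gap.

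The paper does not run Ricci flow from compact initial data at all. Instead it follows Lai's mechanism: it constructs a continuous family of $O(p)\times O(q)$-symmetric metrics $h(t)$ on $\Sp^{p+q-1}$ with $\Rm\geq 1$ that collapse to $\Sp^p_+$ as $t\downto 0$ (Theorem~\ref{thmlink}); uses Deruelle's existence-and-uniqueness theorem to resolve the cones $C(\Sp^{p+q-1},h(t))$ by a \emph{continuous} family of expanding gradient solitons $E_m(t)$; observes that collapsing link volume forces $\operatorname{AVR}\downto 0$ and hence $\mathcal{R}(o_m)\upto\infty$; and passes to a scalar-curvature-normalized limit of the expanders, in which the $\frac{1}{2\mathcal{R}(o_m)}\tilde g_m$ term in the expander equation vanishes and one lands directly on the steady equation (Lemma~\ref{lemsteady}). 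The intermediate value argument for $\lambda_p=\theta\lambda_q$ is run at each fixed $m$ on the continuous family $t\mapsto E_m(t)$, which is well-defined precisely because of Deruelle's uniqueness; the soliton is then extracted as $m\to\infty$.

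The central problem with your approach is the claim that a Type~II rescaling of the flow from a single compact $O(p)\times O(q)$-symmetric initial metric with positive curvature operator produces an eternal limit. By B\"ohm--Wilking, a compact manifold with ($2$-)positive curvature operator converges under normalized Ricci flow to a round spherical space form; the singularity is Type~I, and any parabolic blow-up at the singular time yields a shrinking round sphere, not an eternal flow. You therefore cannot apply Hamilton's Harnack rigidity to conclude the limit is a steady soliton. To salvage the idea one would have to rescale along a family of initial data degenerating to a cylinder (as in degenerate-neckpinch constructions), taking the limit in the parameter $\alpha$ and a time scale simultaneously; but this is not what you have written, and even granting it, the IVT step becomes problematic: such a blow-up limit is only obtained along subsequences and carries no uniqueness statement, so the continuity of the vertex curvature ratio $\lambda_p/\lambda_q$ in $\alpha$---which you correctly identify as the crux---has no obvious source. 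In the paper this continuity is manufactured for free from Deruelle's uniqueness on the expander side. A further caveat: the proposal's use of Perelman non-collapsing to extract limits would rule out collapsed limits, but the paper expects the resulting steady solitons to be collapsed when $q=2$, so that mechanism cannot produce them in that case.
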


As with flying wings, we wish to understand the geometry at infinity for these solitons. We know that the ideal boundary of our Ricci soliton also admits a faithful isometric $O(p) \times O(q)$ action, from which we can deduce that it has the structure of a (not necessarily smooth) doubly warped product (see Theorem \ref{ideal bdry thm}). This structure leaves only a few possibilities for its topology. It would be interesting to check if the orbits of the $O(p) \times \{1_q \}$ subgroup in the ideal boundary of the steady solitons constructed above are non-collapsed, in which case the ideal boundary would be homeomorphic to a closed disk of dimension $p$ (see Remark~\ref{rmk pdisk}).

In fact, assuming the above to be true, we obtain some extra information on the asymptotic geometry. It is known from \cite{DimRedSteadySoltn} that non-collapsed steady gradient Ricci solitons with $\operatorname{sec}\geq 0$ always dimension reduce at infinity. In our case, following points going to infinity along the totally geodesic submanifold fixed by the action of the $\{1_p\} \times O(q)$ subgroup, we show that we can split a $p$-dimensional Euclidean factor under the same rescaling. From now on, for any $p,q \in \Z_{\geq 2}$ and $\th \in (0,1)$, let $\mathcal{S}^{p,q}_\th = (M,g,f,o)$ denote the steady Ricci soliton constructed in Theorem~\ref{thmpq}, and $\mathcal{S}^{p,q}_{\th}(\infty)$ its ideal boundary (see Section \ref{prelimSection} for the precise definition).

\begin{thm}\label{thmsplitting}
Under the additional assumption that the ideal boundary $\mathcal{S}^{p,q}_{\th}(\infty)$ is homeomorphic to a closed disk of dimension $p$ as in Remark~\ref{rmk pdisk}, then for any sequence of points $x_m$ which are fixed by the action of the $\{1_p\} \times O(q)$ subgroup satisfying
\[
d_g(x_m,o)\to \infty,
\]
the rescaled sequence $(M, \mathcal{R}_g(x_m)\cdot g(\mathcal{R}_g(x_m)^{-1}t),x_m)$ smoothly converges along a subsequence to the product of flat Euclidean space $\R^{p}$ with a rotationally symmetric ancient Ricci flow with positive curvature operator $(N^{q},g_N(t),x_\infty)$.
\end{thm}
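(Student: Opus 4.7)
My approach combines Deruelle's smooth dimension reduction at infinity with a Cheeger-Gromoll-type splitting argument. First, the positive curvature operator of $\mathcal{S}^{p,q}_\th$ together with the non-collapsing implied by the $p$-disk ideal boundary hypothesis lets us invoke the dimension reduction result of \cite{DimRedSteadySoltn}: along a subsequence, the rescaled flows $(M, \mathcal{R}_g(x_m)\,g(\mathcal{R}_g(x_m)^{-1} t), x_m)$ converge smoothly in the pointed Cheeger-Gromov sense to a complete ancient Ricci flow $(M_\infty, g_\infty(t), x_\infty)$ with $\Rm \geq 0$ and $\mathcal{R}_{g_\infty}(x_\infty,0)=1$. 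Each $x_m$ is fixed by the compact subgroup $K:=\{1_p\}\times O(q)$, so this action passes to an isometric $K$-action on $M_\infty$ fixing $x_\infty$; the induced representation on $T_{x_\infty}M_\infty$ is trivial on a $p$-dimensional subspace (the limit of $T_{x_m} F$, where $F:=M^K$ is the $p$-dimensional totally geodesic fixed submanifold containing $o$ and all the $x_m$) and the standard $O(q)$-representation on the perpendicular $q$-dimensional subspace.

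The heart of the proof is to show that $M_\infty$ splits isometrically as $\R^p\times N^q$. The induced metric on $F$ is rotationally symmetric about $o$, of the form $dr^2+a(r)^2 g_{S^{p-1}}$. I would first argue from the $p$-disk ideal boundary hypothesis that $F$ is asymptotically conical, i.e.\ $a(r)/r\to c>0$ as $r\to\infty$. Granting this, for each $x_m\in F$ with $r_m:=d(o,x_m)\to\infty$, I produce $p$ orthogonal minimizing segments whose lengths diverge even after rescaling by $\sqrt{\mathcal{R}_g(x_m)}$: the bi-infinite radial geodesic through $o$ provides one; by $O(p)$-equivariance on $F$, tangential geodesics of intrinsic length of order $r_m$ remain minimizing up to a definite fraction of $r_m$ and contribute $(p-1)$ more. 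Since $\mathcal{R}_g(x_m)\lesssim r_m^{-1}$ along $F$, the rescaled lengths grow like $\sqrt{r_m}\to\infty$. A standard symmetrization argument using $K$-invariance of the endpoints promotes these to genuine minimizing segments in $M$ (not merely in $F$), which pass in the limit to $p$ orthogonal lines through $x_\infty$. Iterating Cheeger-Gromoll on $(M_\infty,g_\infty(0))$ yields the isometric splitting at $t=0$, and uniqueness of Ricci flow propagates it to $g_\infty(t)=g_{\R^p}+g_N(t)$.

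To conclude: $K$ acts trivially on the $\R^p$ factor by construction, so $O(q)$ descends to an isometric action on $N^q$ fixing $\bar x_\infty:=\pi_N(x_\infty)$ with standard $O(q)$-representation on $T_{\bar x_\infty}N^q$, forcing $N^q$ to be rotationally symmetric. Non-vanishing of $\mathcal{R}_{g_\infty}(x_\infty,0)$ excludes flatness of $N^q$, and Hamilton's strong maximum principle for the curvature operator on the ancient flow $g_N(t)$ upgrades $\Rm\geq 0$ to $\Rm>0$. The principal obstacle is establishing the asymptotic conicality of $F$ directly from the $p$-disk ideal boundary hypothesis: this requires decoding the doubly warped product structure of $\mathcal{S}^{p,q}_\th(\infty)$ described in Theorem~\ref{ideal bdry thm} and matching the radial coordinate of the disk with the geodesic distance on $F$.
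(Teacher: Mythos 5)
Your proposal has a genuine gap at its very first step, and the paper's proof is specifically structured to avoid it. You invoke the dimension reduction of \cite{DimRedSteadySoltn} after asserting that ``the non-collapsing implied by the $p$-disk ideal boundary hypothesis'' lets you apply it. But non-collapsing is \emph{not} an input here: the paper's Remark following the theorem statement makes clear that non-collapsedness (for $q\geq 3$) is a \emph{consequence} of Theorem~\ref{thmsplitting}, derived afterwards via Lai's argument, and for $q=2$ the authors explicitly expect the solitons to be collapsed while the theorem still holds. Citing Chan--Ma--Zhang therefore makes the argument circular for $q\geq 3$ and simply inapplicable for $q=2$. This is exactly why the paper rescales by the \emph{volume scale} $v_m:=\sup\{r>0:\Vol(B(x_m,r))\geq\tfrac12\omega_n r^n\}$ rather than the curvature scale: Perelman's estimate gives $\mathcal{R}\cdot v_m^2\lesssim 1$, the Harnack inequality propagates the bound backwards in time, and only \emph{after} proving the splitting do they show (via a maximum-principle/non-flatness argument) that $v_m^2\cdot\mathcal{R}_g(x_m)\geq c>0$, which converts back to the curvature-scale statement and yields non-collapsing as output rather than input.

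Two of your subsequent steps also rest on unverified claims. The estimate $\mathcal{R}_g(x_m)\lesssim r_m^{-1}$ along $\Sigma_p$ is asserted without justification; while it holds for Bryant-type solitons, a linear curvature decay bound is not automatic for these flying-wing-type solitons and is not proved in the paper (the volume-scale argument bypasses the issue entirely via the claim $v_m/r_m\to 0$, which only needs $\textnormal{AVR}=0$ and the Gromoll--Meyer/Kleiner--Lott rigidity). And you flag the asymptotic conicality of $F=\Sigma_p$ ($a(r)/r\to c>0$) as ``the principal obstacle'' without resolving it. The paper sidesteps it cleanly: rather than analysing the intrinsic metric on $F$, they pass the blow-down limit $(M,r_m^{-2}g,o)\to C(\mathcal{S}^{p,q}_\theta(\infty))$ through Lemma~\ref{lembu} to identify the tangent cone $T_{\tilde x}C$ at the limit basepoint, read off directly from $\alpha>0$ that this tangent cone splits $\R^p$, extract approximating points and comparison angles in $M$, and conclude that the asymptotic cone of $(M_\infty,g_\infty(0))$ (and hence $M_\infty$ itself, by Cheeger--Gromoll) splits $\R^p$. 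This Alexandrov-theoretic route replaces both the curvature decay estimate and the asymptotic conicality of $F$ that your proposal needs. The final steps you propose (rotational symmetry of $N^q$ via the residual $O(q)$-action, Ricci flow splitting via uniqueness, strict positivity of the curvature operator via the strong maximum principle) match the paper and are fine once the earlier gaps are filled.
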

\begin{rmk}
As a corollary to the theorem above, the splitting of $p$ flat directions allows one to argue exactly as in the proof of \cite[Corollary 3.5]{Lai24} to deduce that the solitons are non-collapsed if $q\geq 3$.
\end{rmk}

\textbf{Outline.} In Section~\ref{prelimSection} we introduce terminology and state useful results on gradient Ricci solitons, as well as from the theory of Alexandrov spaces and manifolds with symmetry. Sections~\ref{linkSection}~and~\ref{s4} contain the proof of Theorem~\ref{thmpq} which we briefly explain here in more detail. In \cite{Lai24}, Lai observed that given a family of expanding solitons with positive curvature operator asymptotic to cones with smooth links, as in \cite{Deruelle15} (see also Theorem \ref{Deruelle thm}), if the asymptotic volume ratio ($\operatorname{AVR}$) is going to zero along the sequence, by performing a blow-up analysis on the points of highest curvature, one extracts a steady Ricci soliton. The $\operatorname{AVR}$ going to zero is obtained by finding a suitable family of collapsing metrics on the sphere; it is not hard to see that if the volume of the links of the corresponding asymptotic cones goes to zero along the sequence, so does the asymptotic volume ratio of the expanding solitons. One of the main steps is therefore finding a family of asymptotic cones with asymptotic volume ratios going to zero which can be smoothed out by Deruelle's expanders. In Section~\ref{linkSection}, we carefully construct metrics on the sphere that satisfy these requirements and, additionally, admit faithful isometric $O(p)\times O(q)$ actions. We complete the proof of Theorem~\ref{thmpq} in Section~\ref{s4}. Finally, in Section~\ref{s5} we study the geometry at infinity of the steady solitons we have constructed and give a proof of Theorem \ref{thmsplitting}.\\

{\it Acknowledgements:} The authors would like to thank Man-Chun Lee for many insightful discussions. The first-named author was supported by the German Research Foundation (DFG) under Germany’s Excellence Strategy EXC 2044-390685587 ‘Mathematics Münster: Dynamics-Geometry-Structure’ and by the CRC 1442 ‘Geometry: Deformations and Rigidity’ of the DFG.

\section{Preliminaries}\label{prelimSection}

We collect here some well known results that will be useful later. We begin with a brief discussion on gradient Ricci solitons, with emphasis on the expanding and steady cases.

\begin{defn}
A triple $(M,g,f)$ consisting of a complete Riemannian manifold \((M,g)\) and a smooth function $f : M \to \R$, referred to as the soliton's potential function, is called a gradient Ricci soliton if it satisfies the equation
\begin{equation}\label{eq GRS}
\Ric(g)+ \frac{\ep}{2} \cdot g= \nabla^2 f,    
\end{equation}
for some $\eps \in \{-1,0,1\}$. If $\eps=1$ ($\eps=0$), we say that $(M,g)$ is an expanding (steady) gradient Ricci soliton.
\end{defn}

From now on, we assume $(M,g,f)$ is an expanding or steady gradient Ricci soliton with positive Ricci curvature. Equation \eqref{eq GRS} then implies that $f$ is a strictly convex function and hence has a unique critical point $o$ which we refer to as the \textit{vertex} of the soliton. We will often use the quadruple $(M,g,f,o)$ when representing the soliton to emphasize the vertex. By the addition of a constant we can always normalise the soiton to ensure that $f \geq 0$ and $f(o) = 0$. The following soliton identities can be found in \cite{CLN06, ChenDeruelle}. 

\begin{lem}
Let $(M^n,g,f,o)$ be a steady or expanding gradient Ricci soliton with positive Ricci curvature. If we assume the normalisation $f(o) = 0$, then the following identities hold:
\begin{align}
&\mathcal{R}_g + \frac{n}{2} = \lap f,\label{eq S1}\\
&\mathcal{R}_g + |\nabla f|^2  - f = \sup_{M} \mathcal{R}_g = \mathcal{R}_g(o),\label{eq S2}\\
&\nabla \mathcal{R}_g + 2\Ric(\nabla f) =0,\label{eq S3}\\
&\frac{1}{4}d_g^2(o,\cdot) \leq f \leq \frac{1}{4} \left(d_g(o,\cdot) + 2 \sqrt{\mathcal{R}_g(o)} \right)^2\label{eq S4}
\end{align}
where $\mathcal{R}_g$ denotes the scalar curvature.
\end{lem}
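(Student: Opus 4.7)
The plan is to derive the four identities in the order (S1), (S3), (S2), (S4), each being obtained from the soliton equation $\Ric(g) + \tfrac{\ep}{2} g = \nabla^2 f$ by a standard tensorial manipulation that builds on the previous ones.

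For (S1), I would simply trace the soliton equation, using $\tr \nabla^2 f = \Delta f$ and $\tr g = n$. For (S3), I would take the divergence of the soliton equation: the twice-contracted second Bianchi identity gives $\divs \Ric = \tfrac{1}{2} \nabla \mathcal{R}_g$ on the left, while commuting covariant derivatives (via the standard identity $\Delta(\nabla_j f) = \nabla_j \Delta f + \Ric_{jk} \nabla^k f$) combined with (S1) produces $\divs \nabla^2 f = \nabla \mathcal{R}_g + \Ric(\nabla f, \cdot)$ on the right. Rearranging yields (S3).

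For (S2), I would differentiate $|\nabla f|^2$ using the soliton equation: $\tfrac{1}{2}\nabla_i |\nabla f|^2 = \nabla^j f \cdot \nabla_i \nabla_j f = \Ric(\nabla f)_i + \tfrac{\ep}{2}\nabla_i f$. Adding this to (S3) shows that $\nabla(\mathcal{R}_g + |\nabla f|^2 - \ep f) \equiv 0$, so this quantity is constant on $M$; evaluating at the critical point $o$, where $\nabla f = 0$ and $f(o) = 0$, identifies the constant as $\mathcal{R}_g(o)$. For the supremum claim, I would invoke (S3) once more: since $\Ric > 0$, we have $\nabla f(\mathcal{R}_g) = -2\Ric(\nabla f, \nabla f) < 0$ away from $o$, so $\mathcal{R}_g$ is strictly decreasing along integral curves of $\nabla f$. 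These curves foliate $M \setminus \{o\}$ as $f$ is strictly convex with unique minimum $o$, forcing $\sup_M \mathcal{R}_g = \mathcal{R}_g(o)$.

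For (S4), I would combine (S2) with the bounds $0 < \mathcal{R}_g \leq \mathcal{R}_g(o)$ to pinch $|\nabla f|^2$ between $f$ and $f + \mathcal{R}_g(o)$ (in the expanding case $\ep = 1$). From $|\nabla f|^2 \geq f$ and $f \geq 0$, one gets $|\nabla \sqrt{f}| \geq \tfrac{1}{2}$ away from $o$; integrating along a minimising geodesic from $o$ to $x$ then yields $f(x) \geq \tfrac{1}{4} d_g(o,x)^2$. The upper bound follows analogously from $|\nabla \sqrt{f + \mathcal{R}_g(o)}| \leq \tfrac{1}{2}$ upon integration and squaring, using $\sqrt{f(o) + \mathcal{R}_g(o)} = \sqrt{\mathcal{R}_g(o)}$. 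The most delicate ingredient in the whole argument is the foliation of $M \setminus \{o\}$ by gradient flow lines used for the supremum claim, which requires the uniqueness of the critical point of $f$; this follows from the strict convexity of $f$ ensured by $\Ric > 0$ and $\ep \geq 0$.
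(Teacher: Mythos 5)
The paper does not actually prove this lemma; it cites \cite{CLN06, ChenDeruelle}. So there is no in-paper argument to compare against, and I review your derivations on their own merits. Your proofs of \eqref{eq S1}, \eqref{eq S3}, and \eqref{eq S2} (both the constancy of $\mathcal{R}_g + |\nabla f|^2 - \ep f$ and the identification of $\sup_M \mathcal{R}_g$ via strict decrease of $\mathcal{R}_g$ along gradient flow lines) are correct and standard, and the upper bound in \eqref{eq S4} via $|\nabla\sqrt{f+\mathcal{R}_g(o)}| \leq \tfrac12$ integrated along a minimising geodesic is also fine.

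However, your argument for the lower bound in \eqref{eq S4} is flawed. From $|\nabla\sqrt{f}| \geq \tfrac12$ you cannot conclude $\sqrt{f}(x) \geq \tfrac12 d_g(o,x)$ by integrating along a geodesic: along a curve $\gamma$ one only controls $\frac{d}{ds}\sqrt{f(\gamma(s))} = \langle\nabla\sqrt{f},\gamma'\rangle$, and a \emph{lower} bound on $|\nabla\sqrt{f}|$ gives no lower bound on this directional derivative (the gradient need not point along the geodesic). A lower bound on $|\nabla h|$ integrated along a geodesic gives nothing; an upper bound gives a Lipschitz estimate, which is why the corresponding step in your upper bound works. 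To repair this, either (a) use strict convexity directly: $\nabla^2 f = \Ric + \tfrac12 g \geq \tfrac12 g$, so for a unit-speed minimising geodesic $\gamma$ from $o$, $\frac{d^2}{ds^2}f(\gamma(s)) \geq \tfrac12$ together with $\frac{d}{ds}f(\gamma(s))|_{s=0}=\langle\nabla f(o),\gamma'(0)\rangle = 0$ gives $f(x) \geq \tfrac14 d_g^2(o,x)$; or (b) integrate along the gradient flow line $\sigma$ of $\nabla f$ from $o$ to $x$, where the gradient \emph{is} aligned with the curve, so that $\frac{d}{dt}\bigl(2\sqrt{f(\sigma(t))}\bigr) = |\nabla f|^2/\sqrt{f} \geq |\nabla f| = |\sigma'(t)|$, yielding $2\sqrt{f(x)} \geq \operatorname{length}(\sigma) \geq d_g(o,x)$. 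Both work; integrating along a minimising geodesic does not.

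A smaller point: as stated the identities \eqref{eq S1}, \eqref{eq S2}, \eqref{eq S4} hardcode $\ep = 1$; your derivations correctly track $\ep$, but it would be worth remarking that in the steady case the terms $\tfrac n2$ and $-f$ drop out and \eqref{eq S4} must be replaced by an estimate using only $\nabla^2 f = \Ric \geq 0$, since the quadratic lower bound fails for steadies.
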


The following lemma relates the scalar curvature at the vertex with the asymptotic volume ratio. See \cite[Corollary 1.2]{sobolevIneq}, \cite[Theorem 4.3]{volGrowth}.

\begin{lem}\label{scalAVR_ratio}
For an expanding gradient Ricci soliton $(M,g,f,o)$ with positive Ricci curvature, we have the inequality
\begin{equation}\label{eqsnn}
\frac{\textnormal{vol}(B(o, \sqrt{\mathcal{R}_g(o)})}{(\sqrt{\mathcal{R}_g(o)})^n} \leq 4^n\cdot  \textnormal{AVR}(g).
\end{equation}
\end{lem}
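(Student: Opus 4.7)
The plan is to control $\textnormal{vol}(B(o, r_0))$ where $r_0 := \sqrt{\mathcal{R}_g(o)}$ by comparing it to the asymptotic volume through the sublevel sets of the potential $f$. Setting $V(t) := \textnormal{vol}(\{f \leq t\})$ and using the normalisation $f(o) = 0$, the inequality \eqref{eq S4} immediately gives the inclusion $B(o, r_0) \subseteq \{f \leq 9 r_0^2/4\}$, and more generally shows that $\{f \leq t\}$ lies between the balls of radii $2\sqrt{t} - 2 r_0$ and $2\sqrt{t}$ for $t$ large. In particular, $V(t)/t^{n/2} \to 2^n \omega_n \cdot \textnormal{AVR}(g)$ as $t \to \infty$, so the problem reduces to bounding $V(9 r_0^2/4)$ in terms of the limit of $V(t)/t^{n/2}$.

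The main step is a Bishop-Gromov-type differential inequality for $V(t)$. Combining the divergence identity $\int_{\{f = t\}} |\nabla f| \, d\mathcal{H}^{n-1} = \int_{\{f \leq t\}}(\mathcal{R}_g + n/2)\, dV \geq (n/2) V(t)$, which follows from \eqref{eq S1}, together with the gradient bound $|\nabla f|^2 \leq \mathcal{R}_g(o) + t$ on $\{f = t\}$ from \eqref{eq S2} (using $\mathcal{R}_g \geq 0$) and the coarea formula, I would derive the ODE $(\log V)'(t) \geq \frac{n/2}{\mathcal{R}_g(o) + t}$. Integrating from $t_0 = 9 r_0^2 /4$ to infinity and matching with the asymptotic $V(t)/t^{n/2} \to 2^n \omega_n \textnormal{AVR}(g)$ then yields $\textnormal{vol}(B(o, r_0)) \leq V(9 r_0^2/4) \leq C_n \cdot r_0^n \cdot \textnormal{AVR}(g)$ for an explicit constant $C_n$ depending only on the dimension $n$.

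The main obstacle is obtaining the sharp constant $4^n$ rather than the slightly worse constant of order $13^{n/2}\omega_n$ produced by a naive application of the above scheme. I expect the cited references \cite{sobolevIneq, volGrowth} to sharpen the argument in one of two related ways: either by invoking Carrillo-Ni-type identities for expanding solitons that express $\textnormal{AVR}(g)$ as an explicit multiple of $\int_M e^{-f}\, dV$, thereby bypassing the coarea step entirely, or by applying the Bakry-\'Emery volume comparison directly to the modified Ricci curvature $\Ric + \nabla^2 f = 2\Ric + g/2 \geq g/2$. In both cases, the uniform bound $f \leq 9 r_0^2/4$ on $B(o, r_0)$ together with the concentration of $e^{-f}$ near the vertex lets one convert the weighted volume comparison into the intrinsic volume comparison claimed in the lemma.
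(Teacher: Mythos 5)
Your approach is essentially the paper's, and you are closer to done than you think. The paper proceeds exactly as in your first two paragraphs: from \eqref{eq S4} it deduces the sandwich $B(o, 2\sqrt{s} - 2\mathcal{R}_g(o)^{1/2}) \subseteq \{f \leq s\} \subseteq B(o, 2\sqrt{s})$, uses the divergence theorem plus the gradient bound from \eqref{eq S2} to get $(\log V)'(t) \geq \tfrac{n/2}{\mathcal{R}_g(o) + t}$, integrates to obtain
\[
\frac{V(r)}{(r + \mathcal{R}_g(o))^{n/2}} \leq 2^n \cdot \textnormal{AVR}(g),
\]
and then evaluates at $r = \tfrac{9}{4}\mathcal{R}_g(o)$ so that the left inclusion gives $B(o,\sqrt{\mathcal{R}_g(o)}) \subseteq \{f \leq \tfrac{9}{4}\mathcal{R}_g(o)\}$.

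The one genuine gap is in your final paragraph: you claim the naive scheme produces a constant ``of order $13^{n/2}\omega_n$, worse than $4^n$,'' and therefore speculate about invoking Carrillo--Ni identities or Bakry--\'Emery comparison. Neither is needed. First, note the paper's convention is $\textnormal{AVR}(g) = \lim_{R\to\infty} \textnormal{vol}(B(o,R))/R^n$ without the factor $\omega_n$; your extra $\omega_n$ is a normalization mismatch, not an actual loss. Second, running the arithmetic with $r = \tfrac{9}{4}\mathcal{R}_g(o)$ gives $r + \mathcal{R}_g(o) = \tfrac{13}{4}\mathcal{R}_g(o)$, hence
\[
\frac{\textnormal{vol}(B(o,\sqrt{\mathcal{R}_g(o)}))}{\mathcal{R}_g(o)^{n/2}} \leq 2^n \left(\tfrac{13}{4}\right)^{n/2} \textnormal{AVR}(g) = 13^{n/2}\,\textnormal{AVR}(g) \leq 4^n\,\textnormal{AVR}(g),
\]
since $\sqrt{13} < 4$. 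So the constant $4^n$ is not sharp in the lemma; it is simply a clean upper bound for $13^{n/2}$, and your ``naive application'' already proves the stated inequality. The speculative third paragraph can be deleted.
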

\begin{proof}
Normalising so that $f(o) = 0$ and using \eqref{eq S4}, for every $s\geq \mathcal{R}(o)$ we have the inclusions
\begin{align}\label{eq; sub-level set incl}
  B\left(o, 2\sqrt{s} - 2\mathcal{R}(o)^{\half}\right) \subseteq     \{f \leq s\}  \subseteq B\left(o, 2\sqrt{s}\right),
\end{align} 
and hence
$$\textnormal{AVR}(g) = \lim_{s \upto \infty} \frac{\textnormal{vol}(\{f \leq s\})}{2^n \cdot s^{\frac{n}{2}}}.$$
Using the soliton equations and the co-area formula we have that
\begin{align*}
\frac{n}{2} \cdot \operatorname{vol}(\{f\leq s\}) &\leq \int_{\{f\leq s\}} \Delta f d\mu = \int_{\{f=s\}}|\nabla f|d\sigma\\
&\leq (s + \mathcal{R}_g(o)) \int_{\{f=s\}} |\nabla f|^{-1} d\sigma\\
&= (s + \mathcal{R}_g(o)) \frac{d}{ds}\operatorname{vol}(\{f\leq s\}),
\end{align*}
which when integrated between $0<r<s<\infty$ yields
\begin{align*}
\frac{\operatorname{vol}(\{f\leq s\})}{\operatorname{vol}(\{f\leq r\})} \geq \exp\left(\frac{n}{2} \cdot \int_r^s (x + \mathcal{R}_g(o))^{-1} dx \right)= \left(\frac{\mathcal{R}_g(o) + s}{\mathcal{R}_g(o) + r}\right)^{\frac{n}{2}}.
\end{align*}
Combining this with the formula for the asymptotic volume ratio and taking $s \upto \infty$ gives
\begin{equation}\label{eqavr}
\frac{\textnormal{vol}(\{ f \leq r\})}{ \left( r + \mathcal{R}_g(o)\right)^{\frac{n}{2}}} \leq 2^n \cdot \textnormal{AVR}(g).
\end{equation}
Finally, setting $r = \frac{9}{4} \cdot \mathcal{R}_g(o)$ in \eqref{eqavr} and using the inclusion \eqref{eq; sub-level set incl}, we have \eqref{eqsnn}.
\end{proof}

The following result of Deruelle \cite{Deruelle15} shows the existence of Ricci expanders resolving non-negatively curved metric cones. Moreover, these Ricci expanders are unique within a suitable class of smoothly asymptotically conical Ricci expanders (see \cite[Definition 1.2]{Deruelle15} for a precise definition of an expander being smoothly asymptotic to a metric cone).

\begin{thm}\label{Deruelle thm}(Theorem 1.3, \cite{Deruelle15})
    Let $h$ be a smooth Riemannian metric on $\mathbb{S}^{n-1}$ with $\operatorname{Rm}(h)\geq 1.$ Then there exists a unique normalised expanding gradient Ricci soliton with non-negative curvature operator smoothly asymptotic to $(C(\mathbb{S}^{n-1}), dr^2+r^2h, \tfrac{r}{2}\partial_r).$
\end{thm}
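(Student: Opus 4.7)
My approach would be a continuity method in the space of conical asymptotic data, paired with a tensor maximum principle to preserve curvature positivity along the deformation.

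Connect $h$ to the round metric on $\mathbb{S}^{n-1}$ via a smooth path $\{h_s\}_{s \in [0,1]}$ of metrics on $\mathbb{S}^{n-1}$ satisfying $\Rm(h_s) \geq 1$ throughout (possible by convexity of this curvature condition), and define $S := \{ s \in [0,1] : \text{an expanding gradient Ricci soliton with } \Rm \geq 0 \text{ smoothly asymptotic to } C(h_s) \text{ exists} \}$. The Gaussian expander $(\R^n, g_{\mathrm{Eucl}}, \tfrac{1}{4}|x|^2)$ witnesses $0 \in S$, so it remains to show $S$ is both open and closed in $[0,1]$.

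For openness, parametrise metrics on $(0, \infty) \times \mathbb{S}^{n-1}$ in the form $g = dr^2 + r^2 h_s + \eta$ with $\eta$ decaying at infinity, append a DeTurck-type term to fix diffeomorphism invariance, and apply the implicit function theorem in weighted Hölder spaces $C^{k,\alpha}_\delta$ adapted to the conical geometry. The linearisation is a Lichnerowicz-type elliptic operator with a first-order drift coming from $\nabla f$; for a weight $\delta$ chosen outside the discrete set of indicial roots of the model conical operator, it is Fredholm of index zero, and positivity of $\Rm(h_s)$ rules out non-trivial kernel elements. For closedness, the soliton identities \eqref{eq S1}--\eqref{eq S4} combined with Lemma~\ref{scalAVR_ratio} yield uniform bounds on the scalar curvature at the vertex; Shi-type estimates upgrade these to local $C^k$ control, and the weighted a priori estimates near infinity ensure the extracted limit is smoothly asymptotic to $C(h_{s_\infty})$.

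Preservation of $\Rm \geq 0$ along the deformation is handled by the tensor maximum principle applied to the self-similar Ricci flow $g(t) = t \cdot \phi_t^* g$ induced by the expander: the cone of non-negative curvature operators is invariant under Hamilton's curvature ODE system, and the hypothesis $\Rm(h_s) \geq 1$ supplies the required barrier at conical infinity. For uniqueness, any two expanders asymptotic to $C(h)$ give self-similar solutions of the Ricci--DeTurck flow with the same singular conical initial datum, and backward uniqueness for parabolic systems (suitably adapted across the cone tip) forces them to coincide up to diffeomorphism.

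The principal technical obstacle lies in the openness step: realising the linearised DeTurck--soliton operator as a Fredholm isomorphism demands a careful indicial-root calculation for the conical model operator and a quantitative use of the positivity of $\Rm(h_s)$ to rule out weighted kernel elements. A secondary difficulty is ensuring that limits obtained in the closedness step remain genuinely smoothly asymptotic to a cone — not merely Cheeger--Gromov limits on compact subsets — which requires uniform decay of $\eta$ at infinity along the approximating sequence, and is precisely what the weighted function-space framework is designed to deliver.
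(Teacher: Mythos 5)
This theorem is quoted directly from Deruelle's paper as Theorem 1.3 of \cite{Deruelle15} and is used as a black box; the present paper does not contain a proof, so there is no internal argument to compare against. Evaluating your sketch on its own terms against Deruelle's actual proof: the overall scaffolding is correct — continuity method anchored at the Gaussian soliton, weighted H\"older--Fredholm theory with an indicial-root analysis for openness, a priori estimates for closedness, and a tensor maximum principle to preserve $\Rm \geq 0$.

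However, one step as stated would fail. The set of metrics $h$ on $\Sp^{n-1}$ with $\Rm(h) \geq 1$ is \emph{not} a convex subset of the space of Riemannian metrics: the curvature operator is a nonlinear function of the metric, so a straight-line homotopy $(1-s)\, g_{\mathrm{round}} + s\, h$ need not preserve the lower curvature bound, and ``convexity of this curvature condition'' is not available. The path $\{h_s\}$ must instead be produced by a different device — the one Deruelle actually uses is to run the (normalised) Ricci flow from $h$, invoking the B\"ohm--Wilking convergence theorem so that $\Rm > 0$ is preserved and the flow converges to the round metric, then rescale along the flow so that the least eigenvalue of the curvature operator remains $\geq 1$ throughout. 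Without this substitution, your deformation argument has no valid starting path. A smaller discrepancy: Deruelle's uniqueness is obtained by studying the Ricci flow coming out of the cone and comparing self-similar solutions within the asymptotically conical class, rather than via a backward-uniqueness theorem for parabolic systems as you propose; both are in the same spirit, but the mechanism differs.
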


\subsection{Metric geometry}

Here we recall some basic facts from Alexandrov geometry. For more details, we refer the reader to \cite{BBI}. For a Riemannian manifold $(M,g)$ with non-negative sectional curvature, let $xyz$ be a geodesic triangle in $M$ and consider the corresponding comparison triangle $\overline{xyz}$ in the Euclidean plane formed by sides of length $d(x,y),d(y,z)$ and $d(x,z)$. Denote by $\widetilde{\measuredangle} xyz$ the angle at $\overline{y}$ in this comparison triangle, for which $\widetilde{\measuredangle} xyz \leq \measuredangle xyz$. Fix $o \in M$ and denote by $\Gamma_o$ the collection of unit speed geodesic rays $\gamma:[0,\infty) \to M$ emanating from $o=\gamma(0)$. Given a pair of geodesic rays $\gamma, \sigma \in \Gamma_0$ and $s>0$, the comparison angle $s \mapsto  \widetilde{\measuredangle} \gamma(s) o \sigma(s)$ is decreasing, and in particular the limit
\[
\lim_{s \to \infty} \widetilde{\measuredangle} \gamma(s) o \sigma(s) \in [0,\pi]
\]
exists and is the angle at infinity between $\gamma$ and $\sigma$. The ideal boundary $M(\infty)$ is then the space of equivalence classes of rays in $\Gamma_o$ equipped with this metric, and forms a compact length space. The asymptotic cone is a metric cone over the ideal boundary, and it is isometric to the pointed Gromov--Hausdorff limit of any blow-down sequence of the manifold. The asymptotic cone being non-negatively curved can then be characterised by a lower bound on the curvature of the ideal boundary in the Alexandrov sense.

\begin{prop}\label{prop; cbb1}(\cite[Prop. 4.2.3]{Alex})
    Let $X$ be a complete metric space. Then $X$ is an Alexandrov space of curvature $\geq 1$ if and only if the metric cone $C(X)$ is not a line and is an Alexandrov space of curvature $\geq 0.$
\end{prop}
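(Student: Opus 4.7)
The proof rests on the cone distance formula
\[
d_{C(X)}((x,r),(y,s))^2 = r^2 + s^2 - 2rs\cos\bigl(\min(d_X(x,y),\pi)\bigr),
\]
which expresses the cone metric via the Euclidean law of cosines with the $X$-distance playing the role of an apex angle. This identity is the bridge between spherical comparison on $X$ and Euclidean comparison on $C(X)$, and both directions of the equivalence are obtained by transferring comparison triangles through it.

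For the forward direction, assume $X$ has curvature $\geq 1$. A Bonnet--Myers type argument for Alexandrov spaces gives $\operatorname{diam}(X)\leq \pi$, so the clipping in the cone formula is inactive on any triple of points we consider. Given three points $p_i=(x_i,r_i)\in C(X)$ (the apex case $r_i=0$ handled as a trivial limit), I would construct a Euclidean model triangle by first taking a spherical comparison triangle $\tilde x_1\tilde x_2\tilde x_3\subset \Sp^2\subset\R^3$ for $x_1x_2x_3\subset X$, whose existence is granted by curvature $\geq 1$, and then placing $\tilde p_i:=r_i\tilde x_i\in\R^3$. The Euclidean sides of $\tilde p_1\tilde p_2\tilde p_3$ satisfy the law of cosines with angles equal to the spherical comparison side-lengths, and by monotonicity of $\cos$ these dominate the actual cone distances $d_{C(X)}(p_i,p_j)$; this furnishes the Euclidean triangle comparison in $C(X)$ and yields curvature $\geq 0$.

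For the reverse direction, I would identify $X$ isometrically with the ``unit sphere'' $\{(x,1)\}\subset C(X)$ equipped with its intrinsic metric, which coincides with $d_X$ on pairs at distance less than $\pi$. Any triple $x_1,x_2,x_3\in X$ with pairwise distances less than $\pi$ lifts to points $(x_i,1)\in C(X)$, and the Euclidean comparison in $C(X)$ applied to the scaled triples $(x_i,\rho)$ for varying $\rho$, combined with the law of cosines, inverts through the cone formula to give exactly the spherical comparison needed for $x_1x_2x_3$ in $X$. The ``not a line'' hypothesis is used to exclude the degenerate case $X=\{p,q\}$ with $d_X(p,q)=\pi$, in which $C(X)\cong\R$ trivially has curvature $\geq 0$ while giving no nontrivial input back to $X$.

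The main obstacle I anticipate is making the lifting of geodesics from $X$ to $C(X)$ in the reverse direction fully rigorous: care is needed for pairs approaching distance $\pi$ and for ruling out that a minimizing geodesic in $C(X)$ between two unit-sphere points collapses through the apex. The non-line condition is precisely what prevents this collapse, ensuring the radial coning procedure preserves minimizers and that the transferred comparison on $X$ is meaningful.
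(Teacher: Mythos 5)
The paper does not prove this proposition; it is cited verbatim from the Alexander--Kapovitch--Petrunin book, so there is no ``paper's proof'' to compare against. Your attempt follows the standard route (as does the cited reference): use the Euclidean cone distance formula to transfer comparison triangles between $X$ with the spherical model $\Sp^2$ and $C(X)$ with the Euclidean model $\R^2$. That is the right framework, but your writeup contains a substantive error and several gaps.

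In the forward direction you build $\tilde p_i = r_i \tilde x_i \in \R^3$ from a spherical comparison triangle $\tilde x_1 \tilde x_2 \tilde x_3 \subset \Sp^2$ and assert that the Euclidean side lengths of $\tilde p_1 \tilde p_2 \tilde p_3$ \emph{dominate} the cone distances $d_{C(X)}(p_i,p_j)$. In fact they are \emph{equal}: since $d_{\Sp^2}(\tilde x_i,\tilde x_j) = d_X(x_i,x_j)$ by the definition of a spherical comparison triangle, the cone formula gives $|\tilde p_i - \tilde p_j| = d_{C(X)}(p_i,p_j)$ identically. Constructing a Euclidean model triangle with matching sides is automatic and carries no content. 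The actual CBB$(0)$ condition is a four-point (or hinge) comparison; the argument should proceed by taking $q = (\gamma(u), s(u))$ on the geodesic $[p_1 p_2] \subset C(X)$, observing that $[p_1 p_2]$ lies in the flat Euclidean sector over a geodesic $\gamma$ in $X$ (so $s(u)$ is explicit and $\tilde q = s(u)\tilde\gamma(u)$), applying the CBB$(1)$ comparison in $X$ to bound $d_X(x_3, \gamma(u)) \geq d_{\Sp^2}(\tilde x_3, \tilde\gamma(u))$, and only then invoking the monotonicity of $\cos$ through the cone formula to conclude $d_{C(X)}(p_3,q) \geq |\tilde p_3 - \tilde q|$. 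You invoke the monotonicity of cosine but apply it to the three vertices rather than to the fourth point, so the key inequality is never established.

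The reverse direction is too compressed to count as a proof: ``applied to the scaled triples $(x_i,\rho)$ for varying $\rho$ \ldots inverts through the cone formula'' does not describe an argument. In fact no varying of $\rho$ is needed; one lifts $x_i$ to $(x_i,1)$, uses the flat-sector structure of the geodesic $[(x_1,1),(x_2,1)]$ over a geodesic $\gamma \subset X$ of length $<\pi$, and then the CBB$(0)$ four-point inequality in $C(X)$ at the intermediate point $q = (\gamma(u), s(u))$ unwinds, via the cone formula and the fact that $\tilde q = s(u)\tilde\gamma(u)$ when $\tilde p_i$ are realized as $\tilde x_i \in \Sp^2 \subset \R^3$, to exactly the spherical comparison $d_X(x_3,\gamma(u)) \geq d_{\Sp^2}(\tilde x_3,\tilde\gamma(u))$. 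Finally, your last sentence misattributes the role of the ``not a line'' hypothesis: it is there solely to exclude $X$ being a two-point space at distance $\pi$, in which case $C(X)\cong\R$ is CBB$(0)$ but $X$ is not even a length space. It does not ``prevent collapse through the apex''; whether a geodesic between non-apex points of $C(X)$ passes through the apex depends only on whether the angular distance of their projections is $\geq \pi$, which is a structural fact about cones independent of that hypothesis.
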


For each geodesic ray $\gamma \in \Gamma_o$, we may also consider its associated Busemann function
$$ B_\gamma(x) := \lim_{t \upto \infty} t - d(x,\gamma(t)), \quad \textnormal{ for each } x \in M.$$
Taking the supremum over all such rays $B := \sup_{\gamma \in \Gamma_0} B_\gamma$ gives the Busemann function about $o$, which is a well-defined convex function \cite[Proposition 1.3]{CG72} whose level sets are convex hypersurfaces diffeomorphic to spheres \cite[Proposition 2.4.3]{LK95}. The following result of Guijarro and Kapovitch states that rescaled level sets of the Busemann function equipped with their intrinsic  metric converge to the ideal boundary.

\begin{lem}[{\cite[Lemma 3.6]{LK95}}]\label{lem Busemann}
For each $s>0$, let $i_s$ denote the intrinsic metric on $\{B=s\}$ viewed as a subset of $M$. Then the rescaled level sets
$$ \left(\{B = s\}, s^{-1} \cdot i_s\right)  \rightarrow M(\infty),$$ 
in the Gromov-Hausdorff sense as $s \upto \infty$.
\end{lem}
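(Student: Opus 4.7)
The plan is to identify the rescaled level sets with the unit sphere in the asymptotic cone $C(M(\infty))$. Under the rescaling $g \mapsto s^{-2} g$, distances scale by $1/s$ and the Busemann function transforms as $B^{s^{-2}g} = B/s$, so the rescaled level set $(\{B=s\}, s^{-1}i_s)$ is identified with the unit level set of $B^{s^{-2}g}$ in $(M, s^{-2}g)$ with its intrinsic metric. Under the pointed Gromov--Hausdorff convergence $(M, s^{-2}g, o) \to (C(M(\infty)), o_\infty)$, the rescaled Busemann function converges to the radial distance from the cone tip $o_\infty$ (on a cone, the Busemann function of any radial ray from the apex coincides with the distance from the apex), so the unit level sets should converge to the unit distance sphere in the cone, which as an intrinsic length space is $M(\infty)$.

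To make this precise, I would construct explicit $\varepsilon_s$-approximations $\Phi_s \colon M(\infty) \to (\{B=s\}, s^{-1}i_s)$ by $\Phi_s([\gamma]) = \gamma(s)$. This is well-defined because $B_\gamma(\gamma(s)) = s$ holds by direct computation, while $B(\gamma(s)) \leq d(o, \gamma(s)) = s$ because every $B_{\gamma'}$ is $1$-Lipschitz and vanishes at $o$; hence $B(\gamma(s)) = s$. The lower bound $\liminf_{s\to\infty} s^{-1} i_s(\gamma(s),\sigma(s)) \geq d_\infty([\gamma],[\sigma])$ is obtained by partitioning any curve $c \subset \{B=s\}$ joining $\gamma(s)$ to $\sigma(s)$ by points $x_0,\ldots,x_N$, applying Toponogov to the triangles $o x_i x_{i+1}$ with $d(o,x_i) \geq B(x_i) = s$ to get $d_M(x_i,x_{i+1}) \geq 2s \sin(\tilde\angle x_i o x_{i+1}/2)$, and then passing to a subsequence in $s$: via compactness of the unit sphere $S_oM$ one extracts limiting rays $\gamma_i \in \Gamma_o$ from the minimising geodesics $\overline{o x_i}$, which form a chain in $M(\infty)$ from $[\gamma]$ to $[\sigma]$ whose length in the length metric on $M(\infty)$ is bounded above by $\liminf_{s\to\infty} s^{-1} \mathrm{length}(c)$.

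The matching upper bound is obtained by discretising a near-geodesic in $M(\infty)$ between $[\gamma]$ and $[\sigma]$ by rays $\gamma = \gamma_0,\ldots,\gamma_N = \sigma$ with $\sum_i \angle_\infty(\gamma_i,\gamma_{i+1}) \leq d_\infty([\gamma],[\sigma]) + \eta$ and each $\delta_i := \angle_\infty(\gamma_i,\gamma_{i+1})$ taken small. Toponogov provides the extrinsic bound $d_M(\gamma_i(s),\gamma_{i+1}(s)) \leq 2s \sin(\delta_i/2)$; to lift this to an intrinsic bound on the level set I would use that $\{B \leq s\}$ is convex, so the $M$-geodesic between consecutive points stays inside $\{B \leq s\}$, and projecting it to the boundary $\{B=s\}$ by the gradient flow of $B$ (equivalently the nearest-point retraction onto the convex body, which is $1$-Lipschitz in nonnegative curvature) yields a curve on the level set of length $s\delta_i(1+o(1))$; summing and letting $\eta \to 0$ gives the upper bound. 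Density of $\Phi_s$ follows by applying the same subsequence extraction to minimising geodesics $\overline{ox}$ for arbitrary $x \in \{B=s\}$ (using $d(o,x)\geq s$) to produce a limit ray $\gamma_x$ with $s^{-1} i_s(\gamma_x(s),x) \to 0$.

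The principal obstacle is the upgrade from the extrinsic chord bound to an intrinsic length bound on the convex hypersurface $\{B = s\}$: while the convexity of $\{B \leq s\}$ gives a $1$-Lipschitz nearest-point retraction, verifying that the retraction inflates curve lengths by a factor only $1 + o(1)$ requires uniform control on $|\nabla B|$ and on the second fundamental form of $\{B=s\}$ at scale $s$. This control is what makes the whole correspondence work, and it ultimately follows from the convergence of the rescaled Busemann functions to the radial distance function on $C(M(\infty))$, where this length-distortion degenerates in the limit.
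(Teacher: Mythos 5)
The paper does not prove this lemma; it is quoted verbatim from Guijarro--Kapovitch \cite[Lemma~3.6]{LK95}, so there is no in-paper proof to compare against. Evaluating your proposal on its own merits: the framework is the right one (explicit $\varepsilon$-approximations $\Phi_s([\gamma])=\gamma(s)$, well-definedness of $\Phi_s$ via $B(\gamma(s))=s$, lower bound via Toponogov on chains of comparison triangles). Those parts are sound.

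The gap is the one you flag yourself, and it is genuine, not a technicality. You need $i_s(\gamma_i(s),\gamma_{i+1}(s)) \leq s\,\delta_i(1+o(1))$, but the device you invoke does not give it. The Sharafutdinov retraction is a $1$-Lipschitz map of $M$ \emph{onto the sublevel set} $\{B\leq s\}$; it is not a map from the interior of $\{B\leq s\}$ to the boundary $\{B=s\}$, and the latter is what you need, since the connecting $M$-geodesic dips inside the convex body. The map from interior points to the boundary (nearest-point projection to the boundary, or gradient flow of $B$) is \emph{not} $1$-Lipschitz in nonnegative curvature --- near the vertex $o$ it expands distances dramatically --- so asserting length distortion $1+o(1)$ requires a quantitative estimate that chords at angular separation $\delta_i$ only dip to depth $O(s\delta_i^2)$ below the level set and that the level set has second fundamental form $\sim s^{-1}$ at scale $s$. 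Neither fact is established, and the second is essentially equivalent to what one is trying to prove. Similarly, the claim that $\{B\leq s\}$ is convex (which you use to say the geodesic stays inside) needs $B$ to be convex, which holds, but the ``gradient flow of $B$'' you then use to project outward is only defined a.e.\ and its length distortion is precisely the uncontrolled quantity. The actual argument in \cite{LK95} avoids this entirely by using the (concave) Busemann function's $1$-Lipschitz gradient push between level sets together with a monotonicity argument, so that no pointwise control on $|\nabla B|$ or on extrinsic curvature of the level sets is ever needed. As written, your upper bound is therefore circular at the flagged step, and closing it would require either importing those estimates or switching to the gradient-push/monotonicity scheme.
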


\subsection{Symmetric Manifolds}
For any $p,q \in \Z_{\geq 2}$, let $n:=p+q$ and consider the natural action of $O(p) \times O(q)$  on $  \R^p \times \R^q = \R^n.$ We refer to this action as the standard action of $O(p) \times O(q)$ on $\R^n$.
\begin{defn}
A Riemannian manifold $(M^n,g)$ is $O(p) \times O(q)$ symmetric if it admits an isometric $O(p) \times O(q)$ action and there exists a diffeomorphism $M^n \to \R^n$ which is equivariant with respect to its standard action on $\R^n$. 
\end{defn}

If a Riemannian manifold is $O(p) \times O(q)$ symmetric, we denote by $\Sigma_p \cong \R^p$ and $\Sigma_q \cong \R^q$ the totally geodesic submanifolds given by the fixed points of the subgroups $\{1_p\} \times O(q)$ and $O(p) \times \{1_q\}$ respectively. Away from these points, our group action is free, and hence we have a Riemannian submersion into the orbit space. If $(M,g,f,o)$ is an expanding or steady gradient Ricci soliton with $\operatorname{Rm}(g)>0,$ where $o$ is the unique critical point of the potential function, and additionally, if the soliton is $O(p)\times O(q)$ symmetric, then the vertex $o$ is the unique fixed point under the action. We can therefore decompose the tangent space $T_oM = T_o\Sigma_p\oplus T_o\Sigma_q.$

Following the discussion above, we may consider the sectional curvature of a $2-$plane $\pi_p \subset T_o\Sigma_p$ (respectively $\pi_q \subset T_o\Sigma_q$). Theorem \ref{thmpq} then says that for any $\pi_p \subset T_o\Sigma_p,$ $\pi_q \subset T_o\Sigma_q,$ and $\theta \in (0,1)$, we have $$\operatorname{sec}(\pi_p)= \lambda_p = \theta \cdot \lambda_q=\theta \cdot \operatorname{sec}(\pi_q).$$

Finally, given a function $f:M\to \R$ and a compact Lie group $G$ acting on $M,$ we can define the $G-$average of $f$ as follows.

\begin{defn}\label{defnGav}
Let $G$ be a compact Lie group acting smoothly by isometries on a complete Riemannian manifold $(M,g)$. For any function $f : M \to \R$, we define its $G-$average by
$$  f_G(x) := \int_G f(\rho \cdot x) \ d\mu_G(\rho),$$
where $d\mu_G$ denotes the unit Haar measure on $G$.
\end{defn}

\begin{lem}\label{lem Gav}
Let $G$ be a compact Lie group acting smoothly by isometries on a complete Riemannian manifold $(M,g)$. If a function $f:M \to \R$ is $\la-$concave, then its $G-$average is also $\la-$concave.
\end{lem}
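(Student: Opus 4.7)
The plan is to reduce $\lambda$-concavity of $f_G$ to $\lambda$-concavity of $f$ along geodesics by exploiting the isometric nature of the action and the linearity of the $G$-average. Recall that $f : M \to \R$ is $\lambda$-concave if and only if for every unit-speed geodesic $\gamma : I \to M$ the real-valued function $t \mapsto f(\gamma(t)) - \tfrac{\lambda}{2} t^2$ is concave. So I would first fix an arbitrary unit-speed geodesic $\gamma$ and aim to show this concavity condition for $f_G \compose \gamma$.

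Because $G$ acts by isometries, for every $\rho \in G$ the translated curve $t \mapsto \rho \cdot \gamma(t)$ is again a unit-speed geodesic in $(M,g)$. Applying the hypothesis of $\lambda$-concavity to each of these geodesics, the functions
\begin{equation*}
t \longmapsto f(\rho \cdot \gamma(t)) - \tfrac{\lambda}{2} t^2
\end{equation*}
are concave on $I$ for every $\rho \in G$. Since $d\mu_G$ is a probability measure and an integral of a family of concave functions (with respect to a non-negative measure in the parameter) is again concave, we can integrate over $G$ to conclude that
\begin{equation*}
t \longmapsto \int_G f(\rho \cdot \gamma(t)) \, d\mu_G(\rho) - \tfrac{\lambda}{2} t^2 \; = \; f_G(\gamma(t)) - \tfrac{\lambda}{2} t^2
\end{equation*}
is concave along $\gamma$, which is the desired conclusion.

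There is essentially no obstacle in this argument, only a small bookkeeping point: one should verify that $f_G$ is actually smooth (or at least that the above interchanges of integration and geodesic restriction are justified), which follows immediately from smoothness of $f$, compactness of $G$, and smoothness of the action, so that one may differentiate under the integral. Once this is noted, the proof reduces to the two observations above: isometries send geodesics to geodesics, and concavity is preserved under averaging against a probability measure.
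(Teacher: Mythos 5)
Your proof is correct and takes essentially the same approach as the paper: fix a unit-speed geodesic, use that isometries carry it to unit-speed geodesics, and then average the $\la$-concavity bound over $G$. The only cosmetic difference is that the paper phrases the bound directly as $(f_G\circ\gamma)'' \leq \la$ by differentiating twice under the integral, whereas you use the equivalent formulation that $t\mapsto f(\rho\cdot\gamma(t)) - \tfrac{\la}{2}t^2$ is concave and that averages of concave functions are concave; your version has the minor advantage of not presupposing smoothness of $f$, which is relevant since the lemma is applied in the context of (a priori non-smooth) Alexandrov-geometric mollification.
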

\begin{proof}
For any unit speed geodesic $\gamma:I \to M$ and $\rho \in G$, its image $\rho \cdot \gamma$ remains a unit speed geodesic, and hence
\begin{align*}
(f_G \circ \gamma)''(t) &= \int_{G} (f \circ(\rho \cdot \gamma))''(t)  \ d\mu_G(\rho)\\
&\leq \int_G \la \ d\mu_G(\rho) = \la,
\end{align*}
where in the second inequality we applied the definition of $f$ being $\la-$concave.
\end{proof}

\section{Smooth Collapsing Links}\label{linkSection}
The purpose of this section is to construct a continuous family of smooth $O(p) \times O(q)$ symmetric metrics on the sphere which have uniformly positive curvature operator, but which collapse to a upper hemisphere of lower dimension. These will be used later as the links of metric cones which we will resolve by gradient Ricci expanders. We note here that, unlike the results in \cite{CLL}, where the authors use delicate Ricci flow stability estimates to construct collapsing metrics on the sphere, our construction below is done explicitly and in a straightforward manner.

\begin{thm}\label{thmlink}
For any $p,q \in \Z_{\geq 2}$, there exists a continuous family of smooth metrics on the sphere $(\Sp^{p+q-1},h(t))_{t \in (0,1]}$ such that
\begin{enumerate}[(i)]
\item Each metric admits a faithful isometric $O(p)\times O(q)$ action.
\item\label{2.2curvature} The curvature operator $\Rm(h(t)) \geq 1$ for any $t \in (0,1]$;
\item\label{2.2collapsing} $(\Sp^{p+q-1},h(t)) \rightarrow \Sp_+^{p}$ in the Gromov--Hausdorff sense as $t \downto 0$.
\end{enumerate}
\end{thm}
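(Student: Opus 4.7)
The plan is to construct $h(t)$ as an $O(p)\times O(q)$-invariant doubly warped product
$$h(t) = ds^2 + \varphi_t(s)^2 g_{\Sp^{p-1}} + \psi_t(s)^2 g_{\Sp^{q-1}}, \qquad s\in[0,L_t],$$
on $\Sp^{p+q-1}$ viewed as the join $\Sp^{p-1}\ast\Sp^{q-1}$. Smoothness at the two singular orbits requires $\psi_t(0)=0,\,\psi_t'(0)=1$ and $\varphi_t(L_t)=0,\,\varphi_t'(L_t)=-1$, together with the usual even-extension conditions on higher derivatives. By $O(p)\times O(q)$-invariance, the curvature operator decomposes into five invariant blocks on which it acts as a scalar, namely
$$-\tfrac{\varphi_t''}{\varphi_t},\quad -\tfrac{\psi_t''}{\psi_t},\quad \tfrac{1-(\varphi_t')^2}{\varphi_t^2},\quad \tfrac{1-(\psi_t')^2}{\psi_t^2},\quad -\tfrac{\varphi_t'\psi_t'}{\varphi_t\psi_t},$$
so the condition $\Rm(h(t))\geq 1$ reduces to five pointwise ODE inequalities in $\varphi_t,\psi_t$ and their derivatives.

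The guiding model is the canonical variation of the Riemannian submersion $\pi\colon\Sp^{p+q-1}\to \Sp_+^p$, $(x,y)\mapsto (x,|y|)$, which rescales the $\Sp^{q-1}$-fibres by $t$. This corresponds to $\varphi_t(s)=\cos(s),\,\psi_t(s)=t\sin(s)$ on $[0,\pi/2]$; a direct verification shows all five eigenvalues are $\geq 1$, so $\Rm\geq 1$ holds. However $\psi_t'(0)=t$ for this model, so for $t<1$ the metric has an orbifold cone singularity of angle $2\pi t$ along $\Sigma_p=\{s=0\}$, and the main technical task is to smooth this singularity while keeping all five inequalities intact.

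The main obstacle is the mixed eigenvalue $-\varphi_t'\psi_t'/(\varphi_t\psi_t)\geq 1$: if $\varphi_t=\cos(s)$ is left unchanged, it rearranges to $(\log\psi_t)'\geq \cot(s)$, and combined with the smoothness condition $\psi_t'(0)=1$ this forces $\psi_t\geq \sin(s)$ throughout, incompatible with collapse. I would therefore replace the model on a shrinking collar $[0,\alpha_t]$ with $\alpha_t\downto 0$: inside the collar, $\psi_t$ is smoothed so that it behaves like $\sin(s)$ near $s=0$ and matches $t\sin(s)$ at $s=\alpha_t$, while $\varphi_t$ is simultaneously perturbed downward from $\cos(s)$ by a carefully coordinated amount, tightening the $\Sp^{p-1}$-directions just enough to preserve the mixed inequality. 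Outside the collar, $(\varphi_t,\psi_t)=(\cos(s),t\sin(s))$ as in the model. Choosing $\alpha_t$ to shrink to zero with $t$ but not too fast makes the perturbation of $\varphi_t$ uniformly $o(1)$ as $t\downto 0$, and the remaining four inequalities are then handled by routine ODE estimation in the transition region.

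Finally, Gromov--Hausdorff convergence is immediate: since $\psi_t\to 0$ uniformly and $\varphi_t\to \cos(s)$ uniformly on $[0,\pi/2]$, the $\Sp^{q-1}$-fibres collapse to points and $(\Sp^{p+q-1},h(t))$ converges to the orbit space with its quotient metric $ds^2 + \cos^2(s) g_{\Sp^{p-1}}$, which is the round upper hemisphere $\Sp_+^p$.
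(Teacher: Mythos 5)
Your identification of the key obstruction is correct: with $\varphi_t \equiv \cos$, the mixed inequality $-\varphi_t'\psi_t'/(\varphi_t\psi_t)\geq 1$ combined with $\psi_t'(0)=1$ forces $\psi_t\geq\sin$, ruling out collapse. The paper faces exactly the same issue. However, your proposed escape route --- perturbing $\varphi_t$ downward on a shrinking collar $[0,\alpha_t]$ while keeping $(\varphi_t,\psi_t)=(\cos(s),t\sin(s))$ outside the collar --- provably cannot maintain $\Rm\geq 1$, and the step you label ``routine ODE estimation'' is where the argument breaks. Write $u:=-\varphi_t'/\varphi_t$. The inequality $-\varphi_t''/\varphi_t\geq 1$ is equivalent to $u'\geq 1+u^2$, and $u(0)=0$ forces $u\geq\tan$ on the collar, hence $(\varphi_t/\cos)'\leq 0$. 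The inequality $(1-(\varphi_t')^2)/\varphi_t^2\geq 1$ gives $\varphi_t(0)\leq 1$, so $\varphi_t/\cos\leq 1$ with the ratio non-increasing. Matching $\varphi_t(\alpha_t)=\cos(\alpha_t)$ then forces $\varphi_t/\cos\equiv 1$, i.e.\ $\varphi_t=\cos$ throughout $[0,\alpha_t]$ --- and then your own argument kicks in again to force $\psi_t=\sin$ on the collar, contradicting $\psi_t(\alpha_t)=t\sin(\alpha_t)$. So no downward perturbation of $\varphi_t$ on a collar is compatible with the pointwise bound $\Rm\geq 1$ and the stated boundary matching.

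The paper avoids this rigidity by keeping the $\Sp^{p-1}$ warping function equal to $\cos(r)$ throughout, accepting that the curvature operator dips slightly below $1$ (they prove $\Rm(\hat h(t))\geq\max\{t,1-t\}$), and then rescaling the metric by a constant to restore $\Rm\geq 1$; since that constant tends to $1$ as $t\downto 0$, the Gromov--Hausdorff limit is unaffected. The real technical content is in controlling how far below $1$ the mixed eigenvalue $\tan\cdot\varphi_t'/\varphi_t$ can dip: this is governed by the weighted integral estimate $\int_0^r -\chi_\de'(x)\sin(x)\,dx \leq \min\{t^2/(1-t),t\}\sin(r)$ for the cutoff functions (Lemma~\ref{lemcutoff}\eqref{HA4} and Lemma~\ref{lemsmooth}), which is a genuinely delicate construction, not a routine estimate. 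If you want to repair your approach, you would have to either also deform $\varphi_t$ outside the collar (losing the convenient model structure and making the analysis global) or adopt the paper's strategy of weakening the pointwise curvature bound and rescaling.
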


To understand this theorem, consider the round sphere as a doubly warped product
$$ \Sp^{p+q-1} \cong \left[0,\frac{\pi}{2}\right] \times_{\cos(r)} {\Sp^{p-1}} \times_{\sin(r)} \Sp^{q-1}.$$

For each $t \in (0,1]$ modify the warping function on the $\Sp^{q-1}$ fibre by a factor of $t$, so that as $t \downto 0$, these singular metrics collapse to $\left[0,\frac{\pi}{2}\right] \times_{\cos(r)} {\Sp^{p-1}} \cong \Sp^p_+$. The desired family of metrics are given by suitable smoothing and rescaling of these singular metrics. Thus in order to prove Theorem~\ref{thmlink} we need a smooth warping function interpolating between $\sin(r)$ and $t \cdot \sin(r)$ which preserves the lower curvature bound.

To begin, we construct a suitable family of smooth approximations to the Heaviside function.

\begin{lem}\label{lemcutoff}
There exists a continuous family of smooth functions $\{\chi_{\de} : \R \to [0,1]\}_{\delta \in (0,1]}$ with the following properties
\begin{enumerate}[(i)]
 \item For each $\delta \in (0,1]$, $\chi_{\de}(\cdot)$ is a decreasing function.\label{HA1}
\item For each $\delta \in (0,1]$, there exists $\eta_{\de} > 0$ such that
    \begin{equation*}
        \chi_{\de}(x) = \begin{cases}
            1 &: x < \eta_{\de}\\
            0 &: x > \frac{\pi}{2} - \eta_{\de}
        \end{cases}.
    \end{equation*} \label{HA2}
\item We have pointwise convergence $\chi_{\de} \rightarrow \mathbbm{1}_{\{\leq 0\}}$ to the Heaviside function as $\delta \downto 0$. \label{HA3}
\item We have the following integral decay of the weighted derivative
\begin{equation*}
    \lim_{\de \downto 0} \sup_{r \in (0,\frac{\pi}{2})} \frac{\int_0^r -\chi_{\de}'(x) \sin(x) dx}{\sin(r)} = 0.
\end{equation*} \label{HA4}
\end{enumerate}
\end{lem}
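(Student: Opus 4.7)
The plan is to build a smooth, decreasing cutoff $\chi_\de$ whose transition is confined to an interval $[a_\de, b_\de] \subset (0, \pi/2)$ with $a_\de \downto 0$ as $\de \downto 0$, choosing the distribution of $-\chi_\de'$ on this interval so that the weighted integral in (iv) becomes tractable. The key ansatz is that $-\chi_\de'(x)$ should be proportional to $\cot(x)$ on the transition region: the identity $\cot(x)\sin(x) = \cos(x)$ makes $\int_0^r (-\chi_\de'(x))\sin(x)\,dx$ reduce to an antiderivative of $\cos$, which is precisely $\sin$, the quantity appearing in the denominator of (iv). If we set $-\chi_\de'(x) = \eps_\de \cot(x)$ on $[a_\de, b_\de]$, the total mass is $\eps_\de \log(\sin b_\de/\sin a_\de)$, so picking $b_\de$ fixed (say $\pi/4$), $a_\de \downto 0$, and $\eps_\de = 1/\log(\sin b_\de/\sin a_\de) \downto 0$ normalises the mass to $1$.

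To realise this ansatz smoothly, I would first fix once and for all a smooth decreasing function $\Phi:\R \to [0,1]$ with $\Phi \equiv 1$ on $(-\infty, 0]$, $\Phi \equiv 0$ on $[1, \infty)$, and with all derivatives vanishing at $0$ and $1$. For each $\de \in (0, 1]$, set $a_\de := \de/2$, $b_\de := \pi/4$, $L_\de := \log(\sin b_\de/\sin a_\de)$, and define
\[
\chi_\de(x) := \Phi\!\left(\frac{\log(\sin x/\sin a_\de)}{L_\de}\right) \quad \text{for } x \in [a_\de, b_\de],
\]
extended by $\chi_\de \equiv 1$ on $(-\infty, a_\de]$ and $\chi_\de \equiv 0$ on $[b_\de, \infty)$. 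The flat vanishing of $\Phi$ at $0$ and $1$ ensures $\chi_\de \in C^\infty(\R)$ and that the family varies continuously in $\de$; property (i) is immediate from $\Phi$ decreasing and $\log\sin$ increasing on $(0, \pi/2)$; (ii) holds with $\eta_\de := \min(a_\de,\, \pi/2 - b_\de)$; for (iii), if $x > 0$ is fixed then $\log(\sin x/\sin a_\de)/L_\de \to 1$ as $\de \downto 0$ (since $\log \sin a_\de \to -\infty$ while $\log \sin x$ is bounded), hence $\chi_\de(x) \to \Phi(1) = 0$, whereas $\chi_\de \equiv 1$ on $(-\infty, 0]$ for all $\de$.

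For (iv), on the transition interval one computes $-\chi_\de'(x)\sin(x) = -\Phi'(\psi(x)/L_\de)\cos(x)/L_\de$, where $\psi(x) := \log(\sin x/\sin a_\de)$. The substitution $u = \psi(x)/L_\de$, which bijects $[a_\de, b_\de] \to [0,1]$ with $\sin(x) = \sin(a_\de)\,e^{u L_\de}$ and $dx = L_\de \tan(x)\,du$, turns the weighted integral into
\[
\int_{a_\de}^{r}(-\chi_\de'(x))\sin(x)\,dx = \sin(a_\de)\int_{0}^{u_r}(-\Phi'(u))\,e^{u L_\de}\,du,
\]
with $u_r := \psi(r)/L_\de$. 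Dividing by $\sin(r) = \sin(a_\de)\,e^{u_r L_\de}$ and substituting $v = u_r - u$ yields
\[
\frac{\int_0^r(-\chi_\de'(x))\sin(x)\,dx}{\sin(r)} = \int_0^{u_r}(-\Phi'(u_r - v))\,e^{-v L_\de}\,dv \;\leq\; \frac{\|\Phi'\|_\infty}{L_\de}
\]
uniformly for $r \in [a_\de, b_\de]$. For $r \geq b_\de$ the numerator is constant in $r$ while $\sin(r) \geq \sin(b_\de)$, so the same bound persists; for $r \leq a_\de$ the ratio vanishes. Taking $\sup_r$ and then $\de \downto 0$ yields (iv), since $L_\de \to \infty$.

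The main obstacle is identifying the correct ansatz. Naive constructions, such as taking $-\chi_\de'$ to be a narrow uniform bump, or making $\chi_\de$ piecewise linear on a shrinking interval $[a_\de, b_\de]$, produce $\sup_r$ of the ratio in (iv) bounded below by a positive universal constant (typically of order $1/2$), regardless of how one scales $a_\de, b_\de$, because within the transition window both numerator and $\sin(r)$ are of the same order at the current scale. The cotangent weighting is exactly what allows the transition to be spread over an exponentially wide window in the $\log\sin$ coordinate while still normalising to unit mass, collapsing the ratio to $O(1/L_\de) \to 0$.
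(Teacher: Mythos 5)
Your proposal is correct, and it takes a genuinely different route from the paper. The paper constructs $\chi_\delta(x) = (1-\gamma_\delta(x))\exp(-\beta_\delta(x))$, where $\beta_\delta$ involves the reparametrisation $x \mapsto x^\delta/\delta - 1$, and then proves \eqref{HA4} indirectly: it splits $-\chi_\delta'$ into a $\gamma_\delta'$-part (estimated by $\exp(-\beta_\delta(\tfrac{\pi}{2}-\delta))\to 0$) and a $\beta_\delta'$-part, for which it assumes a contradicting uniform lower bound, makes the change of variables $y=x^{\delta_j}/\delta_j-1$, and invokes dominated convergence to kill the resulting integrand. Your construction instead places the transition in the $\log\sin$ coordinate, choosing $-\chi_\delta'\sin(x)$ proportional to $\cos(x)$ so that after the substitution $u=\psi(x)/L_\delta$ the ratio in \eqref{HA4} collapses to $\int_0^{u_r}(-\Phi'(u_r-v))e^{-vL_\delta}\,dv \leq \|\Phi'\|_\infty/L_\delta$, giving an explicit decay rate $O\big(1/\log(1/\delta)\big)$ rather than a mere limit statement. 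The verification of \eqref{HA1}--\eqref{HA3} in your argument is routine, and your closing observation — that piecewise-linear or uniform-bump transitions are obstructed by a universal lower bound of order $\tfrac{1}{2}$ on the ratio, so that the cotangent weighting (equivalently the logarithmically wide transition window) is not merely convenient but essential — correctly identifies the central difficulty of the lemma. The only point worth spelling out slightly more is the joint continuity of $(\delta,x)\mapsto\chi_\delta(x)$ in whatever topology is needed downstream (here $C^\infty_{\mathrm{loc}}$), but this does follow from the flat vanishing of $\Phi$ at $0$ and $1$ together with the continuity of $\delta\mapsto a_\delta$ and $\delta\mapsto L_\delta$, exactly as you indicate.
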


\begin{proof}
Begin by defining smooth increasing functions $\beta,\gamma: \R \to [0,\infty)$ such that
\begin{itemize}
    \item $\beta(x) = \gamma(x) = 0$ for $x \leq 0$;
    \item $\beta(x) \geq x$ and $\gamma(x) = 1$ for $x 
    \geq \half$;
    \item $\beta'(x),\beta''(x) \geq 0$ for all $x \in \R$.
\end{itemize}
For each $\de \in (0,1]$, we then renormalised these functions as
$$\beta_{\de}(x):= \beta\left( \frac{x^{\de}}{\de} - 1\right), \quad \gamma_\de(x) := \gamma\left(1 + \frac{x}{\de} -\frac{\pi}{2\de}\right),$$
and define
$$ \chi_{\de}(x) := (1 - \gamma_{\de}(x)) \exp(-\be_{\de}(x)), \quad \forall x \in \R.$$
Since $\be$ and $\gamma$ are increasing, \eqref{HA1} follows immediately. Moreover
\begin{itemize}
    \item $\gamma_{\de}(x) = 1$ for $x \geq \frac{\pi}{2} - \frac{\de}{2}$;
    \item $\gamma_{\de}(x) = 0$ for $x \leq \frac{\pi}{2} - \de$;
    \item $\be_{\de}(x) = 0$ for $x \leq \de^{\frac{1}{\de}} $.
\end{itemize}
Thus \eqref{HA2} follows with $\eta_{\de} := \min\{ \de^{\frac{1}{\de}} , \frac{\de}{2}\}$. Also, \eqref{HA3} is an immediate consequence of the fact that $\frac{x^\de}{\de} \upto \infty$ as $\de \downto 0$ for any $x>0$. It remains to show \eqref{HA4}. We first note that
$$ 0 \leq -\chi_{\de}'(x) \leq (\gamma_{\de}'(x) + \be_{\de}'(x)) \exp(-\be_{\de}(x)),$$
and
\begin{align*}
 \limsup_{\de \downto 0} \frac{\int_0^r \gamma_{\de}'(x) \exp(-\be_{\de}(x)) \sin(x) dx }{\sin (r)} &\leq  \limsup_{\de \downto 0} \int_0^r \gamma_{\de}'(x) \exp(-\be_{\de}(x)) dx\\
 &\leq \limsup_{\de \downto 0} \int_{\frac{\pi}{2} - \de}^{\frac{\pi}{2}} \gamma_{\de}'(x) \exp(-\be_{\de}(x)) dx\\
 &\leq \limsup_{\de \downto 0} \exp(-\be_{\de}(\frac{\pi}{2} - \de)) = 0.   
\end{align*} 

Therefore, we may assume for a contradiction that there are sequences $\de_j \downto 0$ and $r_j \in (0,\frac{\pi}{2})$ satisfying the uniform lower bound
\begin{equation}\label{eqHA4c}
\frac{\int_0^{r_j} \be_{\de_j}'(x) \exp(-\be_{\de_j}(x)) \sin(x) dx}{\sin(r_j)} \geq c > 0.
\end{equation}
Using the definition of $\beta_\de$, it follows that 
$$ R_j := \frac{r_j^{\de_j}}{\de_j} - 1 > 0,$$
and so after potentially passing to a subsequence, we may assume that $R_j \rightarrow R \in [0,\infty]$. Utilising the change of variable $y=\frac{x^{\de_j}}{\de_j} - 1$, we find that
\begin{align*}
\int_0^{r_j} \be_{\de_j}'(x) \exp(-\be_{\de_j}(x)) \sin(x) dx &= \int_{\de_j^{\frac{1}{\de_j}}}^{r_j} \be_{\de}'(x) \exp(-\be_{\de}(x)) \sin(x) dx\\
&= \int_{0}^{R_j}  \beta'(y) \exp(-\beta(y)) \sin \left( \left( \de_j (y+1)\right)^{1/\de_j} \right) dy,
\end{align*}
and hence \eqref{eqHA4c} becomes
\begin{equation}\label{eqHA4cc}
\int_{0}^{\infty}  \beta'(y) \exp(-\beta(y)) f_j(y) dy \geq c,
\end{equation}
where $f_j:(0,\infty) \to [0,1]$ is the measurable function
$$f_j(y) := \frac{\sin \left( \left( \de_j (y+1)\right)^{1/\de_j} \right)}{\sin(r_j)} \cdot \mathbbm{1}_{(0,R_j)}(y).$$
It follows that $f_j(y) \downto 0$ as $j \rightarrow \infty$ for almost every $y > 0$. Indeed, in the case $R = 0$ there is nothing to show, so we may assume $R \in (0,\infty]$. If $y > R$, then $f_j(y)$ is eventually zero by virtue of the indicator function. If $0 < y < R$, for $j$ sufficiently large, $r_j > \left( \de_j (y+\eta+1)\right)^{1/\de_j}$ for some $\eta > 0$. Which when combined with $x/2 \leq \sin(x) \leq x$ for small positive $x$ yields
\begin{align*}
\limsup_{j \upto \infty} f_j(y) &\leq \lim_{j \upto \infty} \frac{\sin\left( \left( \de_j (y+1)\right)^{1/\de_j}\right)}{\sin \left( \left( \de_j (y+\eta+1)\right)^{1/\de_j}\right)}\\
&\leq \lim_{j \upto \infty} 2 \left( \frac{y+1}{y+\eta+1}\right)^{1/\de_j} = 0.
\end{align*}
Finally, using the fact that 
$$\int_0^\infty \beta'(y) \exp(-\beta(y)) dy = 1,$$ 
the dominated convergence theorem implies that the ratio on the left hand side of $\eqref{eqHA4cc}$ converges to zero. This is a contradiction, and therefore \eqref{HA4} is true.
\end{proof}

We may now use the smooth approximations of the Heaviside function from the previous lemma to construct an appropriate family of warping functions.

\begin{lem}\label{lemsmooth}
There exists a continuous family of smooth functions $\{\varphi_{t} : [0,\frac{\pi}{2}] \to [0,1]\}_{t \in (0,1]}$ with the following properties
\begin{enumerate}[(1)]
\item\label{Wsmooth} 
$$\varphi_{t}^{(even)}(0) = 0, \ \varphi_{t}'(0) = 1, \ \varphi_{t}^{(odd)}(\pi/2) = 0, \ \varphi_{t}(\pi/2) > 0.$$
\item\label{Wcollapse} 
$$\varphi_t(r) \leq 3t \cdot \sin(r), \quad \forall r \in (t,\frac{\pi}{2}].$$
\item\label{Wcurvature} $$\frac{-\varphi_t''}{\varphi_t} \geq \tan \cdot \frac{\varphi_t'}{\varphi_t} \geq \max\{ t, 1-t\}.$$
\end{enumerate}
\end{lem}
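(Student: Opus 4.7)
The plan is to define $\varphi_t$ as the antiderivative of $\psi_t(r)\cos(r)$, where the non-negative factor $\psi_t$ interpolates between $1$ near $r=0$ and $t$ near $r=\pi/2$ via the Heaviside approximation from Lemma~\ref{lemcutoff}. Explicitly, for a continuous choice $\delta=\delta(t)\in (0,1]$ to be specified, set
\[
\psi_t(r) := t + (1-t)\chi_{\delta(t)}(r), \qquad \varphi_t(r) := \int_0^r \psi_t(x)\cos(x)\,dx.
\]
By property~\ref{HA2}, $\chi_{\delta(t)}$ is identically $1$ on a right-neighbourhood of $0$ and identically $0$ on a left-neighbourhood of $\pi/2$, so $\varphi_t$ agrees with $\sin$ near $0$ and with a vertical translate of $t\sin$ near $\pi/2$. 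This immediately yields all the boundary and parity conditions in~(\ref{Wsmooth}), and shows $\varphi_t \in [0,1]$ since $\psi_t \in [t,1]$.

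For the curvature condition~(\ref{Wcurvature}), a direct computation gives
\[
-\varphi_t''(r) - \tan(r)\,\varphi_t'(r) = -\psi_t'(r)\cos(r) = -(1-t)\chi_{\delta(t)}'(r)\cos(r)\geq 0,
\]
using that $\chi_\delta$ is decreasing by property~\ref{HA1}; this establishes the first inequality of~(\ref{Wcurvature}). Integration by parts in the definition of $\varphi_t$ yields the key identity
\[
\varphi_t(r) = \psi_t(r)\sin(r) + (1-t)\,I(r), \qquad I(r) := \int_0^r (-\chi_{\delta(t)}'(x))\sin(x)\,dx \geq 0,
\]
so the second inequality of~(\ref{Wcurvature}) reduces to $(1-t)\,I(r) \leq \min\{t,1-t\}\,\varphi_t(r)$. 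Using the lower bound $\varphi_t \geq t\sin(r)$, it suffices to arrange the uniform estimate $I(r)/\sin(r) \leq t\min\{t,1-t\}/(1-t)$, which is exactly the kind of bound supplied by property~\ref{HA4}.

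The key step is then to choose $\delta(t)$ continuously and small enough that (\ref{Wcollapse}) and the curvature reduction above both hold. By monotonicity~\ref{HA1}, $\chi_{\delta(t)}(r) \leq \chi_{\delta(t)}(t)$ for $r \geq t$, and by pointwise convergence~\ref{HA3} this can be forced to be $\leq t$ by taking $\delta(t)$ small enough. Then $\psi_t \leq 2t$ on $[t,\pi/2]$, so the identity above gives $\varphi_t(r) \leq 2t\sin(r) + (1-t)\,I(r)$ there; a second application of~\ref{HA4} ensuring $I(r)/\sin(r) \leq t/(1-t)$ then produces the desired bound $\varphi_t(r) \leq 3t\sin(r)$. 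I expect the main technical point to be verifying that both smallness requirements on $\delta(t)$ can be met simultaneously by a single continuous function $\delta : (0,1] \to (0,1]$; this is routine once each requirement is phrased as $\delta(t) \leq \delta_*(t)$ for an explicit positive threshold.
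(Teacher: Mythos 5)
Your proposal is correct and essentially the same as the paper's: both define $\varphi_t$ as the antiderivative of $\bigl(t+(1-t)\chi_{\delta(t)}\bigr)\cos$, obtain the first inequality of~(\ref{Wcurvature}) directly from $\chi_\delta'\le 0$, use the same integration-by-parts identity $\varphi_t(r)=\tan(r)\varphi_t'(r)+(1-t)\int_0^r(-\chi_{\delta(t)}')\sin$ together with $\varphi_t\ge t\sin$ to reduce the second inequality and~(\ref{Wcollapse}) to the two smallness requirements $\chi_{\delta(t)}(t)\le t$ and $\int_0^r(-\chi_{\delta(t)}')\sin\le\min\{t^2/(1-t),\,t\}\sin(r)$, and both then assert (without belabouring) that a continuous $\delta(t)$ meeting both thresholds exists via properties~(\ref{HA3}) and~(\ref{HA4}).
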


\begin{proof}
Consider the family of smooth approximations of the Heaviside function $\{\chi_{\de}\}_{\de \in (0,1]}$ constructed in Lemma~\ref{lemcutoff}. It follows immediately from properties \eqref{HA3} and \eqref{HA4} of these functions that there exists a continuous decreasing function $\de : (0,1] \to (0,1]$ such that
\begin{enumerate}[(a)]
 \item $\chi_{\de(t)}(t) \leq t$ for every $t \in (0,1]$;\label{eqdelta1}
\item $\int_0^r -\chi_{\de(t)}'(x) \sin(x) dx \leq \min\{ \frac{t^2}{1-t} , t \} \cdot \sin(r)$, for every $t \in (0,1]$ and $r \in [0,\frac{\pi}{2}]$.  \label{eqdelta2} 
\end{enumerate}
From now on we use the shorthand $\chi_t := \chi_{\de(t)}$. Consider the interpolation
$$ \varphi_{t}(r) := \int_0^r t \cos(x) + (1-t) \cdot  \chi_{t}(x) \cos(x) \ dx.$$
It is straightforward that $t \sin(r) \leq \varphi_{t}(r) \leq \sin(r)$. By \eqref{HA2} of Lemma~\ref{lemcutoff}, $\varphi_{t}'(r) = \cos(r)$ for $r$ in a neighbourhood of $0$, and $\varphi_{t}'(r) = t \cdot \cos(r)$ for $r$ in a neighbourhood of $\frac{\pi}{2}$. This gives us the boundary data \eqref{Wsmooth}. Next, using integration by parts and that $\chi_{t}$ is decreasing
\begin{align*}
 \varphi_{t}(r) &= \tan(r) \cdot \varphi_{t}'(r) - (1-t) \int_0^r \chi_{t}'(x) \sin(x) dx \geq  \tan(r) \cdot \varphi_{t}'(r),\\
-\varphi_{t}''(r) &= \tan(r) \cdot \varphi_{t}'(r) - (1-t)\chi'_{t}(r) \cos(r) \geq \tan(r) \cdot \varphi_{t}'(r).
\end{align*}
Together these yield the lower bound
\begin{equation*}
  \frac{-\varphi_{t}''(r)}{\varphi_{t}(r)} \geq \tan(r) \cdot \frac{\varphi_{t}'(r)}{\varphi_{t}(r)} \geq 1 - \frac{(1-t)}{t} \cdot \frac{\int_0^r -\chi_{t}'(x) \sin(x) dx}{\sin(r)}.
\end{equation*}
Using \eqref{eqdelta2} and rearranging, equation \eqref{Wcurvature} follows. Finally, to see equation \eqref{Wcollapse} we bound
\begin{align*}
\varphi_{t}(r) &\leq \tan(r) \cdot \varphi_{t}'(r) + \int_0^r -\chi_{t}'(x) \sin(x) dx\\
&\leq t \sin(r) + \chi_{t}(r) \sin(r) + t \sin(r)\\
&\leq 3t \sin(r),
\end{align*}
for any $r > t$, where in the second inequality we have used \eqref{eqdelta2}, and the third inequality we used \eqref{eqdelta1}. This completes the proof of Lemma~\ref{lemsmooth}.
\end{proof}

\begin{proof}[Proof of Theorem~\ref{thmlink}]
For any $t \in (0,1]$, consider the doubly warped product metrics on $[0,\frac{\pi}{2}] \times \Sp^{p-1} \times \Sp^{q-1}$ given by
$$\hat{h}(t) := dr^2 + \cos^2(r) g_{p-1} + \varphi^2_{t}(r) g_{q-1},$$
where $\varphi_{t}$ are the functions from Lemma~\ref{lemsmooth}. The conditions satisfied by $\varphi_t$ in \eqref{Wsmooth} are precisely those necessary to ensure that $\hat{h}(t)$ is a smooth metric. The curvature operator $\Rm(\hat{h}(t))$ has four distinct eigenvalues:
$$  1, \quad \frac{-\varphi''_{t}}{\varphi_t}, \quad \frac{1 - (\varphi'_t)^2}{\varphi_{t}^2}, \quad \tan \cdot \frac{\varphi_{t}'}{\varphi_{t}}.$$
Using the bounds $t \cos(r) \leq \varphi_{t}'(r) \leq \cos(r)$, $t \sin(r) \leq \varphi_t(r) \leq \sin(r)$, and the inequalities in \eqref{Wcurvature}, we conclude that $\Rm(\hat{h}(t)) \geq \max\{1-t,t\}$. In particular, the rescaled metrics $h(t) := \left( \sqrt{ \max\{1-t,t\}} \right)\cdot \hat{h}(t)$ satisfy $\Rm(h(t)) \geq 1$. The convergence as $t \downto 0$ is a simple consequence of \eqref{Wcollapse}.
\end{proof}

\section{Constructing the Steady solitons}\label{s4}
Recall that any metric cone whose link is a sphere $(\Sp^{n-1},h)$ with $\Rm(h) \geq 1$ can be uniquely resolved by a non-negatively curved expander modulo rescaling. The following lemma, originally due to Lai in the case of $O(1) \times O(n-1)$ symmetry \cite[Lemma 2.3]{Lai24}, shows that along a sequence of expanders whose links have collapsing volume, we can extract a steady soliton.

\begin{lem}\label{lemsteady}
Let $X_m:= (\Sp^{n-1}, h_m)$ denote a sequence of smooth metrics, such that
\begin{enumerate}
\item The curvature operator $\Rm(h_m) \geq 1$;
\item $\Vol (X_m) \downto 0$ as $m \rightarrow \infty$.\label{eq:voldecay}
\end{enumerate}
Denote by $(M_m, g_m , f_m, o_m)$ the expanding gradient Ricci soliton resolving the metric cone $C(X_m)$. After potentially passing to a subsequence, we have the smooth local convergence of the rescaled expanders
\begin{equation*}
    (M_m , \mathcal{R}_{g_m}(o_m) \cdot g_m , f_{m},o_m) \xrightarrow[C^\infty_{loc}]{m \rightarrow \infty} (M,g,f,o),
\end{equation*}
where $(M, g ,f)$ is a steady gradient Ricci soliton, and $o$ denotes a critical point of the potential function $f$.  Furthermore, if for some compact Lie group $G$, the links $X_m$ admit faithful isometric $G$-actions, then the steady soliton $(M,g,f)$ is $G$-symmetric.
\end{lem}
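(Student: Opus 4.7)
The plan is to rescale each expander by $R_m := \mathcal{R}_{g_m}(o_m)$ and extract a smooth Cheeger--Gromov--Hamilton subsequential limit of $(M_m, \tilde g_m, f_m, o_m)$ with $\tilde g_m := R_m g_m$. Since both $\Ric$ and $\nabla^2 f_m$ are scale-invariant as $(0,2)$-tensors, the expanding soliton equation rescales to
\[
\Ric(\tilde g_m) + \tfrac{1}{2R_m}\tilde g_m = \nabla^2 f_m,
\]
which converges to the steady equation $\Ric(g) = \nabla^2 f$ once $R_m \to \infty$ and the sequence converges. A useful preliminary is that on an expander with positive Ricci curvature, $\mathcal{R}_g$ achieves its global maximum at the vertex: identity~(\ref{eq S3}) combined with $\Ric > 0$ forces $o$ to be the unique critical point of $\mathcal{R}_g$, while the asymptotic cone structure gives $\mathcal{R}_g \to 0$ at infinity. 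Non-negativity of the curvature operator then yields the global pinching $|\Rm(g)| \leq c(n)\mathcal{R}_g \leq c(n)\mathcal{R}_g(o)$.

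To show $R_m \to \infty$, Deruelle's expanders being smoothly asymptotic to their cones gives $\operatorname{AVR}(g_m) = \operatorname{Vol}_{h_m}(\Sp^{n-1})/n \to 0$ by~\ref{eq:voldecay}, so Lemma~\ref{scalAVR_ratio} produces
\[
R_m^{-n/2}\operatorname{vol}_{g_m}\!\bigl(B(o_m, \sqrt{R_m})\bigr) \leq 4^n \operatorname{AVR}(g_m) \longrightarrow 0.
\]
If $R_m$ stayed bounded along a subsequence, the pinching above would give a uniform $C^0$ bound on $\Rm(g_m)$, and combined with the simple connectedness $M_m \cong \R^n$ and the Bishop--Gromov / Klingenberg-type estimates available for non-negatively curved simply connected manifolds of bounded curvature, one would obtain a Euclidean-type volume lower bound $\operatorname{vol}_{g_m}\!\bigl(B(o_m, \sqrt{R_m})\bigr)/R_m^{n/2} \geq c > 0$, contradicting the display.

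With $R_m \to \infty$ established, $\mathcal{R}_{\tilde g_m} \leq 1$ together with the curvature operator pinching gives $|\Rm(\tilde g_m)| \leq c(n)$ uniformly, and Shi's estimates promote this to $C^k$ bounds. The same ingredients provide uniform $\tilde g_m$-non-collapsing at $o_m$. Combining $f_m(o_m) = 0$ with~(\ref{eq S2}), (\ref{eq S4}) and the scale relations $|\nabla f|^2_{\tilde g_m} = R_m^{-1}|\nabla f|^2_{g_m}$ and $|\nabla^2 f|_{\tilde g_m} = R_m^{-1}|\nabla^2 f|_{g_m}$, one deduces uniform bounds on $f_m$, $|\nabla f_m|_{\tilde g_m}$ and $|\nabla^2 f_m|_{\tilde g_m}$ on $\tilde g_m$-balls of fixed radius, with higher derivatives following from~(\ref{eq S3}) and the curvature bounds. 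Hamilton's Cheeger--Gromov compactness theorem now delivers the subsequential smooth limit $(M, g, f, o)$ with $\nabla f(o) = 0$ and $\Ric(g) = \nabla^2 f$. For the $G$-symmetric case, the uniqueness in Theorem~\ref{Deruelle thm} forces each $(M_m, g_m, f_m)$ to inherit any faithful isometric $G$-action from its link $X_m$; these actions are equicontinuous with respect to $\tilde g_m$, so up to a further subsequence they converge by Arzel\`a--Ascoli to a faithful isometric $G$-action on the limit.

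The principal technical obstacle is the blow-up step $R_m \to \infty$: the hypothesis of collapsing link volumes must be played against a Euclidean-type volume lower bound at the vertex to rule out the bounded-curvature scenario, and the latter requires a careful use of the simple connectedness $M_m \cong \R^n$ together with the curvature pinching. Once this is done, the remainder is a fairly standard Cheeger--Gromov extraction, the decisive point being that all terms of the expanding equation, except the $\tfrac{1}{2}g$ correction, are scale-invariant as $(0,2)$-tensors, so the correction vanishes in the limit.
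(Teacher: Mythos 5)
Your proposal is essentially correct and follows the same strategy as the paper: rescale by $R_m := \mathcal{R}_{g_m}(o_m)$, use the soliton identities and Lemma~\ref{scalAVR_ratio} to play the decaying asymptotic volume ratio against a Euclidean volume lower bound at the vertex in order to force $R_m\to\infty$, pass to a Cheeger--Gromov limit, and invoke the uniqueness in Theorem~\ref{Deruelle thm} for the symmetry statement. The one genuine difference in organisation is that you establish $R_m\to\infty$ first via a direct injectivity radius / volume lower bound on $(M_m,g_m)$, while the paper first extracts the smooth pointed limit of $(M_m,\tilde g_m,o_m)$ (using the Toponogov injectivity radius bound for the rescaled metrics) and only then derives the contradiction from the Bishop--Gromov inequality and \eqref{eqsnn}; both routes use precisely the same ingredients. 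One small imprecision to fix: the injectivity radius lower bound $\inj\geq\pi/\sqrt{\La}$, and hence the Euclidean-type volume lower bound, requires \emph{positive} sectional curvature bounded above (Gromoll--Meyer diffeomorphism to $\R^n$ plus the conjugate radius estimate, as in the Toponogov citation), not merely non-negative sectional curvature on a simply connected manifold---non-negatively curved examples such as flat cylinders show that simple connectedness alone does not rescue the argument, so you should invoke the strict positivity of curvature of Deruelle's expanders (which follows from the strong maximum principle given the links have $\Rm\geq 1$).
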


\begin{proof}
Since the rescaled expanders are complete with positive sectional curvature bounded above by one, by a result of Toponogov (see e.g. \cite{Top}) we have a uniform lower injectivity radius bound at each vertex $o_m$. Coupled with the global curvature bounds, by standard compactness theory a subsequence converges in a locally smooth sense to some complete Riemannian manifold
$$(M_m , \mathcal{R}_{g_m}(o_m) \cdot g_m ,o_m) \xrightarrow[C^\infty_{loc}]{m \rightarrow \infty} (M,g,o).$$
\begin{claim}
$\mathcal{R}_{g_m}(o_m) \upto \infty$ as $m \rightarrow \infty$.
\end{claim}
\begin{proof}[Proof of Claim]
If not, then after passing to a subsequence we have an upperbound $\mathcal{R}_{g_m}^{\half}(o_m) \leq C < \infty$. Coupled with Bishop-Gromov, the smooth local convergence and \eqref{eqsnn}, we find
\begin{align*}
\frac{\textnormal{vol}(B(o, C))}{C^n} = \lim_{m \upto \infty} \frac{\textnormal{vol}(B(o_m, C))}{C^n} &\leq \limsup_{m \upto \infty} \frac{\textnormal{vol}(B(o_m, \sqrt{\mathcal{R}_{g_m}(o_m)}))}{(\sqrt{\mathcal{R}_{g_m}(o_m)})^n}\\
&\leq \limsup_{m \upto \infty} 4^n \cdot \textnormal{AVR}(g_m),
\end{align*}
which contradicts our assumption that the asymptotic volume ratio of the expanders decays to zero.
\end{proof}
Since the rescaled metrics $\tilde{g}_m:= \mathcal{R}_{g_m}(o_m) \cdot g_m$ solve the equation
\begin{equation}\label{eq expseq}
 \Ric(\tilde{g}_m) + \frac{1}{2 \mathcal{R}_{g_m}(o_m)} \tilde{g}_m = \nabla^2 f_m,
\end{equation}
the local smooth convergence implies that the potential functions $f_m$ converge locally smoothly to a function $f$ on $M$, and hence $(M,g,f)$ satisfies the steady gradient Ricci soliton equation. It also follows trivially from the smooth convergence that $\nabla f(o) = 0$. Furthermore, if the links admit some faithful isometric $G$-action, then by the uniqueness in Theorem \ref{Deruelle thm}, the expanders resolving them must be $G$-symmetric. The symmetry of the limiting steady soliton then follows immediately from the locally smooth convergence.
\end{proof}

\begin{rmk}
In the previous lemma one may weaken the first assumption to a weak PIC1 curvature bound on the corresponding metric cones. In this case, we may instead utilise the existence theory from \cite{chan2024expanding} in order to extract a steady gradient Ricci soliton in the limit. However, due to the lack of a uniqueness result for such expanders, the preservation of symmetry in this case is currently unknown.
\end{rmk}

\begin{proof}[Proof of Theorem~\ref{thmpq}]
Let $h(t)$ for $t \in (0,1]$ be the family of $O(p) \times O(q)$ metrics defined on the sphere $\Sp^{p+q-1}$ constructed in Theorem~\ref{thmlink}. Fix $m \in \N$, and for each $t \in (0,1]$ define the constant $c_{m}(t) > 0$ so that
$$ \Vol(\Sp^{p+q-1}, c_m(t) \cdot h(t) ) = \Vol(\Sp^{p+q-1}, h(m^{-1})).$$
We then consider the new continuous family of $O(p) \times O(q)$ symmetric metrics
\begin{equation}
    h_{m}(t) := \min\{1, c_m(t)\} \cdot h(t), \quad \forall t \in (0,1].
\end{equation}
Note that the links $X_m(t) := (\Sp^{p+q-1},h_m(t))$ are a continuous family over $t \in (0,1]$ of smooth links with $O(p) \times O(q)$ symmetry and curvature operator bounded below by $1$, with $X_m(1)$ a scaled version of the round sphere. Therefore, by the existence and uniqueness theory of Deruelle, Theorem \ref{Deruelle thm}, we have a continuous family of positively curved $O(p) \times O(q)$ symmetric expanding gradient Ricci solitons resolving the metric cones $C(X_m(t))$, normalised to have unit scalar curvature at their vertices. Denote them by $E_{m}(t)$. For any sequence $t_m \in (0,1]^\N,$
\[
\Vol(X_m(t_m)) \leq \Vol(\Sp^{p+q-1}, h(m^{-1})) \downto 0,
\]
as $m \rightarrow \infty$, and so we can apply Lemma~\ref{lemsteady} to extract a $O(p) \times O(q)$ symmetric steady gradient soliton as the locally smooth limit along a subsequence of the expanders $E_m(t_m)$. In order to construct the soliton we want, we choose our sequence $(t_m)_{m \in \N}$ more carefully. We first consider two extremal cases.
\begin{itemize}
\item If $t_m \equiv 1$, because $X_m(1)$ and hence $E_m(1)$ are $O(n)$ symmetric, the sectional curvatures at the tip agree, and $\la_p = \la_q$.
\item If $t_m = \frac{1}{m}$, because $X_m(\frac{1}{m}) \rightarrow \Sp^{p}_+$, the limiting soliton must split $\R^{p}$. Moreover, since the sequence has unit scalar curvature at each vertex, the limiting soliton is not flat, and so the sectional curvature cannot be zero everywhere. It follows by the local smooth convergence at the vertex that for $m$ sufficiently large, $\la_p < \theta \cdot \la_q$ in $E_m(\frac{1}{m})$.
\end{itemize}
Since $t \mapsto E_m(t)$ is continuous for each $m$, we can find $t_m \in (\frac{1}{m},1)$ such that $\la_p = \theta \cdot \la_q$ in $E_m(t_m)$. It follows by the local smooth convergence that the steady gradient Ricci soliton extracted along any subsequence must also satisfy $\la_p = \th \cdot \la_q$ at the vertex. 
\end{proof}

\section{Geometry at infinity}\label{s5}
The aim of this section is to study the structure at infinity for the steady gradient Ricci solitons we constructed in Theorem~\ref{thmpq}. We start by proving the following.

\begin{thm}\label{ideal bdry thm}
For $p,q \in \Z_{\geq 2}$ and $\th \in (0,1)$, let $\mathcal{S}^{p,q}_\th$ be the steady gradient Ricci soliton constructed in Theorem~\ref{thmpq}. Then its ideal boundary has the form of a (not necessarily smooth) doubly warped product
$$\mathcal{S}^{p,q}_\th(\infty) = [0,L] \times_a \Sp^{p-1} \times_b \Sp^{q-1},$$
where $L \in [0,\pi/2]$ and $a,b : [0,L] \to [0,1]$ are $1$-Lipschitz concave functions, with $a$ non-increasing, $b$ non-decreasing, and $a(L) = b(0) = 0$.
\end{thm}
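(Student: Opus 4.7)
The plan is to exploit the $O(p)\times O(q)$ symmetry at infinity together with the Alexandrov curvature bound to realise $\mathcal{S}^{p,q}_\th(\infty)$ as a cohomogeneity-one Alexandrov space of curvature $\geq 1$, and then extract the warped-product structure from the invariant Riemannian submersion on the regular part of the orbit space. Since the soliton has positive curvature operator its asymptotic cone is a non-negatively curved Alexandrov space, so Proposition~\ref{prop; cbb1} gives that the link $\mathcal{S}^{p,q}_\th(\infty)$ is an Alexandrov space of curvature $\geq 1$. The $O(p)\times O(q)$ action is isometric at every scale and so passes to the Gromov--Hausdorff limit. On $M$, the orbit space is a closed planar quadrant whose two boundary rays correspond to the fixed point sets $\Sigma_p$ (along which $\Sp^{q-1}$ collapses) and $\Sigma_q$ (along which $\Sp^{p-1}$ collapses). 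Passing to the asymptotic cone and then to its link yields a one-dimensional orbit space $[0,L]$; the bound $L\leq \pi/2$ follows because the two boundary rays subtend Euclidean angle exactly $\pi/2$ at the vertex $o$, and the Tits angle between the corresponding classes in $\mathcal{S}^{p,q}_\th(\infty)$ is no larger than this initial comparison angle by non-negativity of the sectional curvature.

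\textbf{Warped product form.} On the interior of the orbit space the principal isotropy is $O(p-1)\times O(q-1)$ and the principal orbit is diffeomorphic to $\Sp^{p-1}\times\Sp^{q-1}$. The two irreducible summands of the corresponding isotropy representation are inequivalent, so Schur's lemma forces any invariant inner product on the principal orbit to be a product of rescalings of the round metrics on the two sphere factors. Hence on the regular part the induced submersion metric on $\mathcal{S}^{p,q}_\th(\infty)$ takes the doubly warped product form
\begin{equation*}
dr^2 + a(r)^2 g_{\Sp^{p-1}} + b(r)^2 g_{\Sp^{q-1}},
\end{equation*}
for some continuous $a,b$ on $[0,L]$, and the singular orbit types $\Sp^{p-1}$ and $\Sp^{q-1}$ at the two endpoints force $a(L)=b(0)=0$.

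\textbf{Concavity, Lipschitz and monotonicity.} On the regular part the Alexandrov bound $\geq 1$ specialises to the classical doubly warped product sectional curvature inequalities
\begin{equation*}
-\tfrac{a''}{a}\geq 1,\quad -\tfrac{b''}{b}\geq 1,\quad \tfrac{1-(a')^2}{a^2}\geq 1,\quad \tfrac{1-(b')^2}{b^2}\geq 1,\quad -\tfrac{a'b'}{ab}\geq 1.
\end{equation*}
The first two give concavity of $a,b$; the middle two give the bounds $|a'|,|b'|\leq 1$ and $a,b\leq 1$; and the mixed inequality forces $a'$ and $b'$ to have strictly opposite signs wherever both are positive. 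Combined with concavity and the boundary conditions $a(L)=b(0)=0$, an elementary argument -- a strict interior maximum of $a$ would compel $b'$ to switch sign in a way incompatible with concavity of $b$ together with the mixed inequality -- then yields that $a$ is non-increasing and $b$ non-decreasing on $[0,L]$.

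\textbf{Main obstacle.} The technically subtle point is justifying the warped-product form and the differential inequalities near the singular endpoints, where $\mathcal{S}^{p,q}_\th(\infty)$ need not be smooth. I would handle this by applying the slice theorem for isometric Lie group actions on Alexandrov spaces to reduce to the principal stratum (where the computation is classical), extending $a,b$ continuously up to $[0,L]$, and reading the curvature inequalities distributionally or via second-difference quotients as allowed for concave functions in Alexandrov geometry.
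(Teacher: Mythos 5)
Your proposal takes a genuinely different route from the paper. The paper does not attempt to analyse the group action on the (singular) ideal boundary directly; instead it approximates $\mathcal{S}^{p,q}_\th(\infty)$ by smooth $O(p)\times O(q)$-symmetric metrics with nonnegative sectional curvature via the Alexander--Kapovitch--Petrunin mollification theorem (after symmetrising the $\la$-concave functions in their construction using the $G$-averaging lemma), reads off the doubly warped product structure and the structural properties of the warping functions from the \emph{classical, smooth} cohomogeneity-one theory, and then passes these properties ($1$-Lipschitz, concave, monotone, boundary data) to the limit via Arzel\`a--Ascoli. The bounds $L\leq \pi/2$ and $a,b\leq 1$ are obtained afterwards by doubling the leaf $[0,L]\times_a\Sp^{p-1}$ along its boundary sphere and applying Bonnet--Myers, and by the Bishop--Gromov inequality for curvature $\geq 1$ Alexandrov spaces, respectively. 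Your argument for $L\leq\pi/2$ via monotonicity of comparison angles along the fixed rays of $\Sigma_p$ and $\Sigma_q$ is a clean and valid alternative to the paper's doubling argument (since the orbit-space distance $L$ is bounded above by the angle at infinity between those two ray classes, which is at most the initial angle $\pi/2$).

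However, there is a genuine gap in the central step. You correctly identify it as the ``main obstacle,'' but you defer rather than resolve it, and it is substantial: the ideal boundary is a priori only an Alexandrov space, and it need not be a Riemannian manifold even on the regular part of the orbit space. Your Schur's lemma argument requires a smooth invariant inner product on the principal orbit, and the five sectional curvature inequalities you list (in particular the mixed one $-a'b'/(ab)\geq 1$, which drives the monotonicity argument) require the warping functions to be twice differentiable, or at least $C^1$ so that the curvature bound can be interpreted in a generalised sense. Before any of this, one must also know that the orbit structure at infinity is preserved --- that principal orbits of $\mathcal{S}^{p,q}_\th(\infty)$ are still $\Sp^{p-1}\times\Sp^{q-1}$ rather than some degenerate quotient --- which again asks for the warped product form one is trying to establish. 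Making ``distributional'' or ``second-difference'' sense of the mixed inequality on a singular space, and upgrading the slice theorem for Alexandrov spaces to a Riemannian submersion statement on the principal stratum, would each require real work and are not standard. The paper's approach sidesteps all of this in one stroke: by smoothing first, every structural fact you invoke becomes the classical smooth statement, and the only thing that has to survive the limit is a list of pointwise and Lipschitz properties of the one-variable functions $a_k, b_k$, which is handled by Arzel\`a--Ascoli. To repair your proof you would either need to carry out the Alexandrov-analytic program in detail, or adopt the paper's AKP-smoothing step as the first move.
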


\begin{rmk}\label{rmk pdisk}
The quantity $\al := a(0)$ (resp. $\be:=b(L)$) corresponds to the length of the largest $O(p) \times \{1\}$ (resp. $\{1\} \times O(q)$) orbit inside the ideal boundary. If $\al > 0$, it would follow that $\be = 0$ since the asymptotic volume ratio of $\mathcal{S}^{p,q}_\th$ is zero (see, for instance, \cite{KleinerLott}), and hence 
$$ \mathcal{S}^{p,q}_\th(\infty) = [0,L] \times_a \Sp^{p-1} \cong \Sp^p_{+}.$$
\end{rmk}

By Lemma~\ref{lem Busemann} we have the Gromov-Hasdorff convergence of the level sets
$$ \left(\{B = s\},s^{-1}\cdot i_s\right)  \rightarrow \mathcal{S}^{p,q}_{\th}(\infty),$$
as $s \upto \infty$, where $B$ denotes the Busemann function about the vertex $o$. Moreover, it is clear that $B$ inherits the $O(p) \times O(q)$ symmetry of the soliton. Note however that although these hypersurfaces are convex, they may not be smooth. We therefore appeal to a mollification method of Alexander, Kapovitch and Petrunin \cite{AKP13} to approximate each of the level sets by smooth metrics on the sphere with non-negative sectional curvature.

\begin{lem}\label{lem symmetric approx}
There exists a sequence of smooth $O(p) \times O(q)$ symmetric metrics on the sphere with non-negative sectional curvature converging to the ideal boundary in the Gromov--Hausdorff sense.
\end{lem}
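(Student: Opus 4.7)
The strategy is to smooth the Busemann function $B$ about $o$, symmetrise via the $G$-averaging of Definition~\ref{defnGav}, and use level sets of the resulting smooth $G$-invariant convex functions as the approximating hypersurfaces. Since the soliton $(M,g) = \mathcal{S}^{p,q}_\th$ has positive sectional curvature, $B$ is convex along every geodesic of $M$, and it is $G = O(p) \times O(q)$ invariant because the group action fixes the vertex $o$. Its sublevel sets $\{B \leq s\}$ are compact convex regions bounded by the hypersurfaces $\{B = s\}$ appearing in Lemma~\ref{lem Busemann}.

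First I would invoke the mollification procedure of \cite{AKP13}, which on a Riemannian manifold with non-negative sectional curvature produces a family of smooth, strictly convex functions $B_\eps \in C^\infty(M)$ converging locally uniformly to $B$ as $\eps \downto 0$. I would then restore $G$-symmetry by setting $\widetilde{B}_\eps := (B_\eps)_G$. Since $G$ is a compact Lie group acting smoothly by isometries, $\widetilde{B}_\eps$ is smooth and $G$-invariant; by Lemma~\ref{lem Gav} (applied with $\la = 0$) convexity is preserved under $G$-averaging, and strict convexity follows as well since integrating a family of positive Hessians yields a positive Hessian. Because $B$ is itself $G$-invariant, $\widetilde{B}_\eps \to B$ locally uniformly as $\eps \downto 0$.

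Given a sequence $s_k \upto \infty$, I would pick $\eps_k \downto 0$ fast enough, and by Sard's theorem perturb $s_k$ within an arbitrarily small interval, so that $s_k$ is a regular value of $\widetilde{B}_{\eps_k}$ while $\widetilde{B}_{\eps_k}$ remains $C^0$-close to $B$ on a sufficiently large ball about $o$. The level set $\Sigma_k := \widetilde{B}_{\eps_k}^{-1}(s_k)$ is then a smooth, compact, $G$-invariant, strictly convex hypersurface, and a standard Morse-theoretic argument applied to the strictly convex $\widetilde{B}_{\eps_k}$ shows $\Sigma_k$ is diffeomorphic to $\Sp^{p+q-1}$. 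By the Gauss equation, combined with $\sec_g \geq 0$ and positive definiteness of the second fundamental form of $\Sigma_k$, the induced metric on $\Sigma_k$ satisfies $\sec \geq 0$. Finally, Hausdorff closeness of $\Sigma_k$ to $\{B = s_k\}$ together with Lemma~\ref{lem Busemann} yields $(\Sigma_k, s_k^{-1} \cdot d_k) \to \mathcal{S}^{p,q}_\th(\infty)$ in the Gromov--Hausdorff sense, where $d_k$ denotes the induced intrinsic distance on $\Sigma_k$.

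The main technical obstacle is executing this diagonal argument cleanly: balancing $\eps_k \downto 0$ against $s_k \upto \infty$ so that $C^0$-proximity of the functions survives the rescaling by $s_k^{-1}$ and still implies GH-proximity of the rescaled intrinsic metrics. A uniform lower bound on $|\nabla \widetilde{B}_{\eps_k}|$ in a neighbourhood of $\{B = s_k\}$ is what converts $C^0$-proximity of functions into Hausdorff proximity of level sets; this should follow from $|\nabla B| = 1$ outside a compact set, since $B$ is a genuine Busemann function on a manifold of positive sectional curvature.
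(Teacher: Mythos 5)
The overall strategy here --- mollify the Busemann function, $G$-average via Lemma~\ref{lem Gav} to restore symmetry, take level sets, and use the Gauss equation --- is the same as the paper's, and you correctly identify the $G$-averaging trick as the new ingredient needed on top of \cite{AKP13}. The genuine gap is in the final convergence step. What $C^0$-proximity of $\widetilde{B}_{\eps_k}$ to $B$ (together with a gradient lower bound) gives you is that $\Sigma_k$ and $\{B=s_k\}$ are close in the \emph{ambient} Hausdorff distance, whereas Lemma~\ref{lem Busemann} is a statement about the \emph{intrinsic} length metrics $i_s$ on the level sets, and the statement you must prove concerns the induced intrinsic metrics $d_k$. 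Ambient Hausdorff closeness of two hypersurfaces does not in general control the Gromov--Hausdorff distance of their intrinsic metrics. For boundaries of convex bodies in a non-negatively curved ambient space such a stability result does hold, but it is a non-trivial continuity theorem, and it is essentially what \cite[Theorem 1]{AKP13} packages together with the curvature bound --- which is exactly why the paper invokes that theorem as a black box rather than reproving it. You flag this as the ``main technical obstacle,'' but flagging it is not the same as closing it: as written, the proof stops precisely at the point where the real work lies.

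A secondary issue sits in the first step. On a curved manifold, mollification of a merely convex function need not produce a convex (let alone strictly convex) function; Greene--Wu-type smoothing only yields $\nabla^2 B_\eps \geq -C\eps$, and the Busemann function is not strictly convex, so there is no margin to absorb that negative error for free. The $\lambda$-concave apparatus of \cite{AKP13} is designed exactly to cope with this approximate, rather than exact, concavity. Consequently the claim that the mollification hands you ``smooth, strictly convex functions $B_\eps$'' overstates what is available; one should expect to work with $\lambda$-concave approximations of $-B$ and then track the small error through the Gauss-equation computation, which is a further reason the paper cites \cite[Theorem 1]{AKP13} wholesale and only modifies its internal $\lambda$-concave constructions by $G$-averaging, rather than extracting a clean convex smoothing of $B$ and rebuilding the argument from there.
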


\begin{proof}
Applying \cite[][Theorem 1]{AKP13} directly gives a smooth approximation of the ideal boundary by metrics of non-negative sectional curvature. In order to ensure that the approximation retains the $O(p) \times O(q)$ symmetry of the ambient soliton, we appeal to Lemma~\ref{lem Gav} so that, for all $\la-$concave functions used in the proofs of \cite[][Lemma 4, Theorem 1]{AKP13}, we may replace them by their $O(p) \times O(q)-$average. The symmetry of the approximations is then immediate.
\end{proof}

\begin{proof}[Proof of Theorem \ref{ideal bdry thm}]
Let $h_k$ be a sequence of smooth $O(p) \times O(q)$ symmetric metrics on $\Sp^{p+q-1}$ with non-negative sectional curvature converging to the ideal boundary in the Gromov-Hausdorff sense, which exists by Lemma~\ref{lem symmetric approx}. By the standard theory of cohomogeneous one group actions, these metrics are doubly warped products
$$ (\Sp^{p+q-1},h_k) \cong  [0,L_k] \times_{a_k} \Sp^{p-1} \times_{b_k} \Sp^{q-1},$$
for some $L_k>0$ and smooth functions $a_k,b_k : [0,L_k] \to [0,\infty)$ satisfying certain constraints (see the discussion in \cite{BoehmEinstein,coh1Einstein} for the precise details). In particular, the functions $a_k,b_k$ are concave and $1$-Lipschitz, with $a_k$ decreasing and $b_k$ increasing, and with the boundary data $a_k(L_k) = b_k(0) = 0$.

It follows from Proposition \ref{prop; cbb1} that the ideal boundary is a connected Alexandrov space of curvature bounded below by one. By the metric space formulation of Bonnet-Myers \cite[Theorem 10.4.1]{BBI}, its diameter is bounded above by $\pi$. By the convergence to the ideal boundary, after passing to a subsequence we can ensure that $L_k \rightarrow L \in [0,\pi]$ as $k \rightarrow \infty$. Furthermore, by scaling the metrics $h_k$ if necessary, we may assume that $L_k$ is a decreasing sequence. It follows that the sequence of warping functions $a_k,b_k$ are uniformly bounded $1$-Lipschitz concave functions on $[0,L]$, and by Arzela-Ascoli, along a further subsequence we have uniform convergence to some limiting functions $a,b : [0,L] \to [0,L]$. Note that this convergence immediately implies the Gromov-Hausdorff convergence $$(\Sp^{p+q-1},h_k) \rightarrow [0,L] \times_a \Sp^{p-1} \times_b \Sp^{q-1},$$
as $k \rightarrow \infty$, which gives the ideal boundary its desired form. 

Finally, the leaf $([0,L] \times_{a} \Sp^{p-1}) \times \{y_0\}$ for a fixed $y_0 \in \Sp^{q-1}$ is a length space, and hence is an Alexandrov space of curvature bounded below by $1$ in its own right. Its boundary is the sphere $\{L\} \times \Sp^{p-1}$. Doubling along this boundary gives an Alexandrov space of curvature bounded below $1$, and hence applying the metric space formulation of Bonnet-Myers again, has diameter bounded above by $\pi$. In particular, we deduce that $2L \leq \pi$. Similarly, by applying the Bishop-Gromov theorem for Alexandrov spaces with curvature bounded below by $1$ \cite[][Theorem 10.6.6]{BBI}, it follows that $a,b \leq 1$.
\end{proof}

For the rest of this section, we fix $\mathcal{S}_{\th}^{p,q} = (M,g,f,o)$ for some $p,q \in \Z_{\geq 2}$ and $\th \in (0,1)$. Moreover, we assume that $\al > 0$ in  $\mathcal{S}_{\th}^{p,q}(\infty)$ as in Remark~\ref{rmk pdisk}. We will now perform a dimension reduction along a sequence of points $x_m \in \Sigma_{p}$ going to infinity rescaled by their scalar curvature. It follows by the work of Chan--Ma--Zhang that a non-collapsed steady gradient Ricci solitons with $\operatorname{sec}\geq 0$ always dimension reduces at infinity \cite{DimRedSteadySoltn}. However, we expect when $q=2$ that our soliton is collapsed. Instead of trying to follow \cite{DimRedSteadySoltn}, we shall utilise the symmetry to bypass the non-collapsing assumption as in \cite[Lemma 3.3]{Lai24}. Under our non-collapsing assumption on the ideal boundary, the limit will split more copies of $\R$. The proof is inspired by the ideas in \cite[Section 4]{Zhu23}. We begin with the following general result about Alexandrov spaces. 

\begin{lem}\label{lembu}
If $(X_m,d_m,x_m)$ is a sequence of Alexandrov spaces of non-negative curvature converging in the pointed Gromov--Hausdorff sense to $(X,d,x)$, and if $(T_xX,\rho,0)$ denotes the tangent cone of $(X,d)$ at $x$, then after passing to a subsequence, we have convergence of the unit balls
$$ \left( B(x_m,1) , m \cdot d_m, x_m \right) \rightarrow (B(0,1),\rho,0)$$
in the pointed Gromov--Hausdorff sense as $m\to \infty,$ where $B(0,1)\subset T_xX.$ 
\end{lem}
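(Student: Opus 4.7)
The plan is to apply a standard diagonal argument, combining continuity of the given convergence under rescaling with the defining property of the tangent cone as the blow-up of $X$ at $x$. First I would note that since pointed Gromov--Hausdorff convergence is continuous under rescaling of the metric by a fixed positive constant, for each fixed $\la > 0$ we have $(X_m, \la \cdot d_m, x_m) \to (X, \la \cdot d, x)$ in pointed GH as $m \to \infty$. Next, since $X$ is itself an Alexandrov space of non-negative curvature (being a pointed GH limit of such), the classical tangent cone theorem of Burago--Gromov--Perelman guarantees that the tangent cone at $x$ is well-defined as the pointed GH limit of blow-ups, so $(X, \la \cdot d, x) \to (T_x X, \rho, 0)$ in pointed GH as $\la \to \infty$.

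The two convergences above take place in the compact class of pointed Alexandrov spaces of curvature $\geq 0$ and dimension bounded by the ambient dimension. Metrizing the pointed GH topology on this class by some $\sigma$, I would then apply a diagonal argument: inductively pick $m_k > m_{k-1}$ such that $\sigma((X_{m_k}, k \cdot d_{m_k}, x_{m_k}), (X, k \cdot d, x)) < 1/(2k)$, which is possible by the first convergence applied at $\la = k$. For $k$ large enough the second convergence ensures $\sigma((X, k \cdot d, x), (T_x X, \rho, 0)) < 1/(2k)$, so the triangle inequality yields $(X_{m_k}, k \cdot d_{m_k}, x_{m_k}) \to (T_x X, \rho, 0)$ in pointed GH. Relabelling the subsequence so that its new index coincides with the rescaling factor, and restricting to the unit ball at the base point, then gives the stated convergence.

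The main obstacle is the diagonal coupling in the final step: the rates of the two convergences being combined (approximation of $X$ by $X_m$ for a fixed scale, and approximation of $T_x X$ by rescalings of $X$) are a priori uncorrelated, so passing to a subsequence is essential to realize both approximations simultaneously along a single sequence. Beyond Gromov precompactness (using the ambient dimension bound together with the lower Alexandrov curvature bound) and the existence and uniqueness of the tangent cone for non-negatively curved Alexandrov spaces, no further input is needed.
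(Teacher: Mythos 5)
Your proof takes the same route as the paper: identify the tangent cone as the pointed Gromov--Hausdorff limit of blow-ups $(X,\lambda\cdot d,x)$, then run a diagonal argument against the convergence $X_m\to X$ at each fixed scale. One small slip: you cannot force $\sigma\bigl((X,k\cdot d,x),(T_xX,\rho,0)\bigr)<1/(2k)$ for large $k$, since the tangent-cone convergence comes with no prescribed rate -- but the diagonal argument only needs this quantity to tend to $0$, so replacing $1/(2k)$ by some $\varepsilon_k\to 0$ fixes it and the conclusion stands.
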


\begin{proof}
Using the definition of the tangent cone
$$  (B(x,1) , j\cdot d,x) \rightarrow (B(0,1),\rho,0),$$
in the pointed Gromov--Hausdorff sense as $j \upto \infty$. For each $j \in \N$, choose $m$ (depending on $j$) sufficiently large so that
$$ d_{GH}\left( (B(x_m,1), j \cdot d_m, x_m) , (B(x,1) , j\cdot d,x) \right) < \frac{1}{j}.$$
The result follows by applying a suitable diagonal argument. 
\end{proof}

For each $x \in M$, define the volume scale
$$ v(x) := \sup \{ r>0 : \Vol(B(x,r)) \geq \half \omega_n r^n \}.$$
To simplify notation, set $r_m :=  d_g(o,x_m)$, $v_m := v(x_m)$ and $R_{m} := \mathcal{R}_{g_m}(x_m)$.
\begin{claim}
Along any sequence $x_m$ tending to infinity, we have
\begin{equation}\label{eqnVD}
    \lim_{m \rightarrow \infty} \frac{v_m}{r_m} = 0.
\end{equation}
\end{claim}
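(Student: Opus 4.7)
My plan is a contradiction argument: assume $v_m/r_m$ does not tend to zero, and convert this into a positive lower bound on the asymptotic volume ratio $\text{AVR}(g)$, contradicting the fact that $\text{AVR}(\mathcal{S}^{p,q}_\theta) = 0$ (a property already invoked in Remark \ref{rmk pdisk}, and which is forced under the assumption $\alpha > 0$ since then the asymptotic cone $C(\Sp^p_+)$ has Hausdorff dimension $p + 1 < n$).

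Suppose, after passing to a subsequence, that $v_m \geq c \cdot r_m$ for some constant $c > 0$. The definition of the volume scale gives $\text{Vol}(B(x_m, v_m)) \geq \tfrac{1}{2} \omega_n v_m^n$, and the triangle inequality yields the inclusion $B(x_m, v_m) \subset B(o, r_m + v_m)$. Combining these and dividing by $\omega_n (r_m + v_m)^n$ yields
\begin{equation*}
 \frac{\text{Vol}(B(o, r_m + v_m))}{\omega_n (r_m + v_m)^n} \,\geq\, \frac{1}{2} \left(\frac{v_m}{r_m + v_m}\right)^n \,\geq\, \frac{1}{2} \left(\frac{c}{1+c}\right)^n,
\end{equation*}
where the second inequality uses $r_m \leq v_m/c$. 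Since $\text{Ric}(g) > 0$, Bishop--Gromov monotonicity tells us that $R \mapsto \text{Vol}(B(o, R))/(\omega_n R^n)$ is non-increasing, with limit $\text{AVR}(g)$ as $R \upto \infty$. As $r_m + v_m \geq r_m \to \infty$, the above uniform lower bound forces $\text{AVR}(g) \geq \tfrac{1}{2}(c/(1+c))^n > 0$, the desired contradiction.

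I do not anticipate any serious obstacle: the computation is elementary, and the only external input is $\text{AVR}(g) = 0$. This can either be taken as given via Remark \ref{rmk pdisk}, or derived from the assumption $\alpha > 0$ as follows: by Lemma \ref{lem Busemann} the rescaled Busemann level sets converge to $\mathcal{S}^{p,q}_\theta(\infty)$, which is Alexandrov of Hausdorff dimension $p$; a volume convergence / dimension-drop argument (either Colding-type for Ricci limits or its Alexandrov analogue) then rules out any positive $n$-dimensional volume growth at infinity, i.e., $\text{AVR}(g) = 0$.
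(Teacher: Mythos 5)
Your proof is correct and takes essentially the same approach as the paper's: both assume $v_m \geq c\, r_m$ along a subsequence, use the inclusion $B(x_m,v_m) \subset B(o,\cdot)$ together with the volume-scale definition and Bishop--Gromov monotonicity to extract a positive lower bound on $\textnormal{AVR}(g)$, and then derive a contradiction. The only difference is cosmetic: the paper closes by invoking the Kleiner--Lott rigidity theorem (an ancient Ricci flow with non-negative curvature operator and positive AVR must be flat, contradicting $\Rm(g)>0$), whereas you cite the equivalent already-stated fact $\textnormal{AVR}(g)=0$ from Remark~\ref{rmk pdisk}.
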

\begin{proof}[Proof of Claim]
If we assume the claim fails, then after passing to a subsequence we may assume that $v_m \geq \eta \cdot r_m$ for some $\eta \in (0,1]$. It follows that
\begin{align*}
\textnormal{AVR}(g) &= \lim_{m \upto \infty} \frac{\textnormal{vol}(B_g(o,2r_m))}{ (2r_m)^n} \geq \limsup_{m \upto \infty} \frac{\textnormal{vol}(B_g(x_m,\eta\cdot r_m))}{ (2r_m)^n}\\
&\geq \left(\frac{\eta}{2}\right)^n \cdot \limsup_{m \upto \infty} \frac{\textnormal{vol}(B_g(x_m,\eta\cdot r_m))}{ (\eta \cdot r_m)^n}\\
&\geq \left(\frac{\eta}{2}\right)^n \cdot \limsup_{m \upto \infty} \frac{\textnormal{vol}(B_g(x_m,v_m))}{ (v_m)^n} = \left(\frac{\eta}{2}\right)^n \frac{\omega_n}{2} > 0,
\end{align*}
where in the fourth inequality we used Bishop-Gromov. Since the only ancient Ricci flows with non-negative curvature operator and positive asymptotic volume ratio are flat \cite{KleinerLott}, we have a contradiction.  
\end{proof}

We first consider the rescaled sequence $(M,v_m^{-2} \cdot g(v_m^2 t),x_m).$ By an estimate of Perelman \cite[Corollary 45.1(b)]{KleinerLott}, $\mathcal{R}_{v_m^{-2} \cdot g(v_m^2 t)}(x_m)$ is uniformly bounded above. Note that this bound persists backwards in time along the Ricci flow by the Harnack inequality \cite{HarnackHamilton}, and we therefore have a uniform injectivity radius lower bound along the sequence. By Hamilton's compactness theorem, we can extract an ancient Ricci flow $(M_\infty,g_\infty(t),x_\infty)$ along a subsequence.

\begin{prop}\label{prop vs}
After passing to a subsequence, the ancient Ricci flows $(M,v_m^{-2} \cdot g(v_m^2 t),x_m)$ smoothly converges to an ancient Ricci flow that splits $\R^p$. That is, the limit is of the form $(\R^p \times N^q, g_\infty(t),x_\infty)$, with $g_\infty(t) = g_0 \oplus g_N(t)$, where $g_0$ is the flat Euclidean metric on $\R^p$. Moreover, $(N^q,g_N(t))$ is a rotationally symmetric ancient Ricci flow with positive curvature operator.
\end{prop}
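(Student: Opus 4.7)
The plan is to exhibit $p$ orthogonal lines through $x_\infty$ in $M_\infty$ and apply the Cheeger--Gromoll splitting theorem $p$ times, propagating the splitting through the Ricci flow via Hamilton's strong maximum principle for the curvature operator. The first $p-1$ lines come from ``unwinding'' the $O(p) \times \{1_q\}$-orbits through $x_m$, whose diameters grow linearly in $r_m$ by the assumption $\alpha > 0$, so that they become infinite under the $v_m^{-1}$-rescaling. The last line is the complete geodesic in $\Sigma_p$ through $o$ and $x_m$, which becomes bi-infinite in the limit because $r_m/v_m \to \infty$.

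\textbf{Step 1: Orbit size.} Writing the $O(p)$-invariant metric induced on $\Sigma_p \cong \R^p$ as $dr^2 + \phi(r)^2 g_{\Sp^{p-1}}$, the intrinsic radius of the $O(p)\times\{1_q\}$-orbit $S_m := O(p)\cdot x_m$ equals $\phi(r_m)$. By Lemma \ref{lem Busemann} and Theorem \ref{ideal bdry thm}, the hypothesis $\alpha > 0$ yields $\phi(r_m)/r_m \to \alpha > 0$; combined with \eqref{eqnVD}, we obtain $\phi(r_m)/v_m \to \infty$.

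\textbf{Step 2: Equivariant limit.} Consider the Killing fields $X^m_1,\dots,X^m_{p-1}$ on $(M, v_m^{-2}g)$ generated by an orthonormal basis of $T_{x_m}S_m$ via the $O(p)\times\{1_q\}$-action, each normalised to unit $v_m^{-2}g$-norm at $x_m$. Their rescaling constants $c^m_i = v_m^{-1}\phi(r_m)$ blow up by Step 1, so that the pairwise Lie brackets $[X^m_i,X^m_j]$, being structure-constant combinations of unnormalised fields divided by $c^m_i c^m_j$, have norms tending to zero. A standard Ascoli argument within Hamilton's compactness produces commuting limit Killing fields $X^\infty_1,\dots,X^\infty_{p-1}$ on $M_\infty$ of unit norm at $x_\infty$, generating a free isometric $\R^{p-1}$-action whose orbit through $x_\infty$ is a flat totally geodesic $\R^{p-1}$ (the limit of the large round orbits $S_m$, which converge equivariantly to the Euclidean $E(p-1)$-action on $\R^{p-1}$). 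The $p$-th line arises from the complete geodesic in $\Sigma_p$ through $o$ and $x_m$: rescaled by $v_m^{-1}$, both ends diverge (since $r_m/v_m \to \infty$), producing a bi-infinite line through $x_\infty$ orthogonal to the flat $\R^{p-1}$.

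\textbf{Step 3: Splitting, symmetry and positivity.} Since $\operatorname{Ric}(g_\infty(t)) \geq 0$ is inherited from the soliton, $p$ successive applications of Cheeger--Gromoll give $M_\infty = \R^p \times N^q$ with $g_\infty(t) = g_0 \oplus g_N(t)$ and $g_0$ flat. The $\{1_p\} \times O(q)$-action persists in the limit, acts trivially on $\R^p$, fixes $x_\infty$ in $N^q$, and retains principal orbits $\Sp^{q-1}$, so $N^q$ is rotationally symmetric. Finally, the normalisation $\Vol_{v_m^{-2}g}(B(x_m,1)) = \tfrac{1}{2}\omega_{p+q}$ built into the definition of $v_m$ rules out $M_\infty$ being flat Euclidean; the null space of $\operatorname{Rm}(g_N)$ at the fixed point $x_\infty$ is $O(q)$-invariant, hence either $\{0\}$ or all of $T_{x_\infty}N^q$, and the latter case would force $g_N$ (and hence $g_\infty$) to be flat, a contradiction. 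Hamilton's strong maximum principle along the Ricci flow then gives $\operatorname{Rm}(g_N(t)) > 0$. The principal technical hurdle is Step 2: rigorously identifying the limit orbit as a flat, non-collapsed $\R^{p-1}$ via equivariant Gromov--Hausdorff convergence of large round spheres to Euclidean space, rather than to a torus or a lower-dimensional set.
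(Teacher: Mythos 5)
Your route is genuinely different from the paper's. The paper blows down the soliton by $r_m^{-2}$ to the metric cone $C$ over the ideal boundary, locates the image $\tilde{x}$ of $x_m$, and applies Lemma~\ref{lembu} to identify the appropriately rescaled ball around $x_m$ with the unit ball of the tangent cone $T_{\tilde{x}}C$, which splits $\R^p$ because $\alpha>0$. It then transfers the splitting to $M_\infty$ by producing points $y^\pm_{m,j}\in\Sigma_p$ whose scale-invariant comparison angles at $x_m$ converge to those of an orthogonal frame, and, following \cite[Lemma 4.4]{Zhu23}, extracts rays $\sigma^\pm_j$ in $M_\infty$ whose asymptotic angles realise an $\R^p$-splitting of the asymptotic cone of $M_\infty$, from which $M_\infty$ itself splits. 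You instead attempt to build $p$ lines directly in $M_\infty$: $p-1$ from unwinding the orbits $S_m$ via Killing fields and equivariant convergence, and one from the radial geodesic. Your route is more hands-on and avoids the asymptotic-cone detour, but the paper's comparison-angle framework absorbs precisely the two points where your argument has gaps.

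The first gap is the final sentence of Step~2: that ``both ends diverge'' under the $v_m^{-1}$-rescaling does not by itself make the radial geodesic a line. Two rays from $x_\infty$ whose concatenation is a geodesic need not be globally minimizing. This is repairable: since each half-geodesic $\gamma^\pm$ through $x_m$ is distance-realizing from $o$, the reverse triangle inequality gives $d(\gamma^+(s),\gamma^-(s')) \geq |d(o,\gamma^+(s))-d(o,\gamma^-(s'))| = s+s'$ for $s'\leq r_m$, while going through $x_m$ gives $d(\gamma^+(s),\gamma^-(s'))\leq s+s'$; hence the concatenation is minimizing over segments of rescaled length comparable to $r_m/v_m\to\infty$, producing a line in the limit. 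But this argument, or something equivalent, must be supplied.

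The second gap concerns promoting the flat totally geodesic limit orbit $\R^{p-1}$ to lines in $M_\infty$: a geodesic that is a line in a totally geodesic submanifold need not be a line in the ambient manifold. The orbits $S_m$ are not convex, and the commutator estimate, while showing the limit Killing fields commute, does not by itself yield convexity, total geodesicity, or that distances in the limit orbit agree with ambient distances. One way to close this is to pass through the limit of $\Sigma_p$ rather than the orbit alone: $\Sigma_p$ is a convex, totally geodesic, $O(p)$-invariant submanifold with warped-product metric $dr^2+\phi(r)^2 g_{\Sp^{p-1}}$; its limit under the $v_m^{-2}$-rescaling is again convex, and one can argue (using concavity of $\phi$, $\phi(r_m)/v_m\to\infty$, and that the sectional curvatures of $\Sigma_p$ rescale to a warped product over $\R$ with concave warping, which is then constant) that this limit is flat $\R^p$; convexity then makes its lines ambient lines. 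You correctly flagged this as the principal technical hurdle, but the resolution requires more than equivariant GH convergence of the spheres, and this is part of why the paper chooses to work with comparison angles throughout.
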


\begin{proof}
Using equation \eqref{eqnVD} we may assume that after passing to a subsequence 
$$ r_m \geq m^2 \cdot v_m.$$
We first note that a subsequence of $(M,r_m^{-2}g,o)$ converges in the pointed Gromov--Hausdorff sense to the metric cone $(C,d_C,o_C)$ over the ideal boundary. Moreover, after passing to possibly a further subsequence, we can ensure that $\tilde{x}_m$, the image of $x_m$ inside the cone $C$ under the Gromov-Hausdorff approximations, converges to a point $\tilde{x}_m \rightarrow \tilde{x}$, with $d_C(\tilde x,o_C) = 1$. In particular, we have the pointed Gromov--Hausdorff convergence of $(M,r_m^{-2}g,x_m)$ to $(C,d_C,\tilde{x})$. We can now apply Lemma~\ref{lembu} so that, again after possibly passing to a further subsequence, we have the pointed Gromov--Hausdorff convergence of the balls 
\begin{equation}\label{eq stc}
\left( B(x_m,\frac{r_m}{mv_m}), v_m^{-1} \cdot d_g, x_m\right) = (B(x_m,1), m r_m^{-1}d_g, x_m) \rightarrow (B(0,1),\rho,0),
\end{equation} 
where $(B(0,1),\rho)$ is the unit ball inside $(T_{\tilde{x}} C,\rho,0)$, the tangent cone of $C$ at $\tilde{x}$. Finally, by our assumption that $\al > 0$ in the ideal boundary, $T_{\tilde{x}}C$ splits $\R^p$. In particular, there exists points $y_1^{\pm},\ldots,y_p^{\pm} \in B(0,1)$ such that
\begin{equation*}
\begin{cases}
\rho(0,y_j^\pm) = \half &: \forall j \in \{1,\ldots,p\},\\
\rho(y_j^+,y_j^-) = 1 &: \forall j \in \{1,\ldots,p\},\\
\rho(y_j^{\pm},y_k^{\pm}) = \frac{1}{\sqrt{2}} &: \forall k \neq j \in \{1,\ldots,p\}.\\
\end{cases}
\end{equation*}
By the convergence in \eqref{eq stc}, there exists sequences of points $y_{m,1}^{\pm},\ldots,y_{m,p}^{\pm} \in M$ such that as $m \rightarrow \infty$ we have
\begin{equation*}
\begin{cases}
m r_m^{-1} d_g(x_m,y_{m,j}^\pm) \rightarrow \half &: \forall j \in \{1,\ldots,p\},\\
m r_m^{-1} d_g(y_{m,j}^+,y_{m,j}^-) \rightarrow 1 &: \forall j \in \{1,\ldots,p\},\\
m r_m^{-1} d_g(y_{m,j}^{\pm},y_{m,k}^{\pm}) \rightarrow \frac{1}{\sqrt{2}} &: \forall k \neq j \in \{1,\ldots,p\},\\
\end{cases}
\end{equation*}
and by the angle comparison theorem it follows that
\begin{equation*}
\begin{cases}
\liminf_{m \rightarrow \infty} \measuredangle y_{m,j}^+ x_m y_{m,j}^{-} \geq \pi &: \forall j \in \{1,\ldots,p\},\\
\liminf_{m \rightarrow \infty} \measuredangle y_{m,j}^{\pm} x_m y_{m,k}^{\pm} \geq \frac{\pi}{2} &: \forall k \neq j \in \{1,\ldots,p\}.\\
\end{cases}
\end{equation*}
In particular, arguing the same way as in \cite[Lemma 4.4]{Zhu23}, the geodesics $x_m y_{m,j}^{\pm}$ converge pointwise to some geodesic rays $\sigma_{j}^{\pm}:[0,\infty) \to (M_\infty,g_\infty(0))$ emanating from $\sigma_j^{\pm}(0) = x_\infty$, for $j \in \{1,\ldots,p\}$. Moreover, it follows from the previous inequalities that
\begin{equation*}
\begin{cases}
\lim_{s \rightarrow \infty} \measuredangle \sigma_{j}^+(s) x_\infty \sigma_{j}^{-}(s) = \pi &: \forall j \in \{1,\ldots,p\},\\
\liminf_{s \rightarrow \infty} \measuredangle \sigma_{j}^{\pm}(s) x_\infty \sigma_{k}^{\pm}(s) \geq \frac{\pi}{2} &: \forall k \neq j \in \{1,\ldots,p\}.\\
\end{cases}
\end{equation*}

The convergence in \eqref{eq stc} implies that $x_m$ is a $(p,\delta(m))$-strained point of size $\frac{m}{2}$, with $\delta(m) \downto 0$ as $m \rightarrow \infty$ (see for example \cite{BBI} for the precise definition of a strainer). That is, the limiting space splits $\R^p$. By the strong maximum principle, the Ricci flow splits $\R^p$ at all times. The symmetry and positivity of the curvature of $g_N(t)$ follows trivially from the smooth convergence.
\end{proof}

We have shown the appropriate splitting along our sequence of points going to infinity, up to rescaling by the volume scale. By using the symmetry, we claim that the volume scale and the curvature scale are in fact comparable.
\begin{claim}
There exists $c>0$ such that $R_m \cdot v_m^{2} \geq c$, for every $m \in \N$.
\end{claim}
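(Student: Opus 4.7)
The plan is to argue by contradiction, pushing any failure of the bound into the smooth limit supplied by Proposition~\ref{prop vs}. Suppose that no uniform $c > 0$ exists; then, after passing to a subsequence, $v_m^2 R_m \to 0$.

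Write $\tilde{g}_m := v_m^{-2} g$, so that $v_m^2 R_m = \mathcal{R}_{\tilde{g}_m}(x_m)$. Extracting a further subsequence as in Proposition~\ref{prop vs}, the rescaled ancient Ricci flows $(M, \tilde{g}_m(t), x_m)$ converge smoothly on compact sets to $(\R^p \times N^q, g_0 \oplus g_N(t), x_\infty)$, where $(N^q, g_N(t))$ is a rotationally symmetric ancient Ricci flow with positive curvature operator. Using the smooth convergence at $t = 0$, the additivity of scalar curvature on Riemannian products, and flatness of the $\R^p$ factor,
\[
0 \;=\; \lim_{m \to \infty}\mathcal{R}_{\tilde{g}_m}(x_m) \;=\; \mathcal{R}_{g_N(0)}(x_\infty) \;>\; 0,
\]
which is the desired contradiction and proves the claim.

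The only point requiring vigilance is that strict (rather than merely non-negative) positivity of $\Rm(g_N(0))$ at the distinguished point $x_\infty$ is genuinely available. This is asserted by Proposition~\ref{prop vs}, but it can be reconfirmed as follows if needed: by the definition of $v_m$ as a supremum and Bishop--Gromov monotonicity, $\Vol_{\tilde g_m}(B(x_m,1)) = \tfrac{1}{2}\omega_{p+q}$; smooth convergence transfers this equality to the limit, ruling out a flat Euclidean limit, and an application of Hamilton's strong maximum principle for the curvature operator then precludes any null direction of $\Rm(g_N(0))$ at $x_\infty$, since such a direction would yield a local splitting of $N^q$ incompatible with its rotational symmetry. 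This is the only step where any subtlety enters; otherwise the claim follows immediately from Proposition~\ref{prop vs}.
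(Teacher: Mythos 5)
Your proof is correct and ultimately relies on the same ingredients as the paper's: the volume normalization $\Vol_{\tilde g_m}(B(x_m,1))=\tfrac12\omega_n$ passing to the limit, and Hamilton's strong maximum principle combined with the $O(q)$ symmetry at the fixed point $x_\infty$. The one thing to be careful about is that your ``main'' argument cites the positivity of $\Rm(g_N(t))$ from Proposition~\ref{prop vs} at face value, but the smooth convergence used in that proposition only yields $\Rm(g_N(t))\geq 0$ directly; strict positivity is precisely what the present claim is being used to secure, so invoking it without qualification would be circular. Your ``vigilance'' paragraph is therefore not an optional reconfirmation but the actual content of the proof, and it matches the paper's argument: the paper deduces from $\mathcal R_{g_\infty}(x_\infty)=0$ that $g_\infty(t)$ is flat by the strong maximum principle, then uses the $O(q)$ symmetry with fixed point $x_\infty$ to upgrade flatness to an isometry with Euclidean $\R^{p+q}$, contradicting the half-volume normalization of the unit ball. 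Your formulation (volume rules out flat Euclidean, strong maximum principle plus $O(q)$-invariance of the null space at the fixed point rules out any null direction) is the contrapositive of the same chain; both are correct. If you rewrite, lead with the volume/maximum-principle argument rather than presenting it as a footnote, and make explicit that the null space of $\Rm(g_N(0))$ at the fixed point $x_\infty$ is $O(q)$-invariant, hence either trivial or all of $T_{x_\infty}N$, with the latter excluded by the volume bound.
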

\begin{proof}[Proof of Claim]
If not, then the limiting Ricci flow from Proposition~\ref{prop vs} satisfies $\mathcal{R}_{g_\infty}(x_\infty) = 0$, which implies $g_\infty(t)$ is flat by the strong maximum principle. However, since $g_N(t)$ is $O(q)$ symmetric, $(\R^p \times N^q , g_\infty(t))$ must isometric to flat Euclidean space $\R^{p+q}$ and we have a contradiction to $\Vol_{g_\infty}(B(x_\infty,1)) = \frac{\omega_n}{2}$.   
\end{proof}

Therefore, the curvature is comparable to our volume scale along the sequence $x_m$ and hence we may conclude that the sequence $(M, \mathcal{R}_{g}(x_m) g(t \mathcal{R}_g(x_m)^{-1}) , x_m)$
also splits $\R^p$, which finishes the proof of Theorem \ref{thmsplitting}.

\printbibliography

\end{document}